\documentclass[11pt]{amsart}

\usepackage{color}
\usepackage{changes}
\usepackage{enumerate}
\usepackage{ulem,ifthen,xcolor,xkeyval,pdfcolmk}

\definechangesauthor[color=red]{zl}

\usepackage{amssymb}

\usepackage{graphics}
\usepackage{graphicx}

\usepackage{amsmath}
\usepackage{amsthm}
\usepackage{amsfonts}
\usepackage{xcolor}
\usepackage[english]{babel}
\usepackage[margin=1.0in]{geometry}
\usepackage[colorlinks=true,
        linkcolor=blue]{hyperref}
\usepackage{bbm}
\usepackage{verbatim}
\usepackage[T1]{fontenc}

\parindent 0 mm
\parskip 5 mm



\newcommand{\E}{\mathcal{E}}
\newcommand{\F}{\mathcal{F}}
\newcommand{\R}{\mathbb{R}}
\newcommand{\B}{\mathcal{B}_a}
\newcommand{\LX}{{L^2(X;\ \mu)}}

\newcommand{\LXa}{{L^2(X_a)}}
\newcommand{\UT}{\tilde{U}}

\newcommand{\abs}[1]{\left|#1\right|}       
\newcommand{\N}[1]{||#1||}                  
\newcommand{\ang}[1]{\left<#1\right>}       
\newcommand{\brak}[1]{\left(#1\right)}      
\newcommand{\crl}[1]{\left\{#1\right\}}     
\newcommand{\edg}[1]{\left[#1\right]}       

\newtheorem{thm}{Theorem}[section]
\newtheorem{defi}{Definition}[section]
\newtheorem{definition}{Definition}[section]
\newtheorem{prop}{Proposition}[section]

\newtheorem{lem}{Lemma}[section]
\newtheorem{remark}{Remark}[section]

\newtheorem{assump}{Assumption}

\numberwithin{equation}{section}


\title{Powers of generators on Dirichlet Spaces and applications to Harnack principles}
\author{Fabrice Baudoin, Quanjun Lang, Yannick Sire}
\date{}

\begin{document}
\maketitle

\begin{abstract}
We provide a general framework for the realization of powers or functions of suitable operators on Dirichlet spaces. A first contribution is to unify the available results dealing with specific geometries; a second one is to provide new results on rather general metric measured spaces which were not considered before and falls naturally in the theory of Dirichlet spaces. The main tool is using the approach based on subordination and semi-groups by Stinga and Torrea. Assuming more on the Dirichlet space, we derive several applications to PDEs such Harnack and Boundary Harnack principles.  
\end{abstract}

\tableofcontents

\section{Introduction}

The last two decades  have seen an important amount of works whose aim is to realize powers of some operators $L$ in terms of a suitable extension. When the operator $L$ is second-order in divergence-form for instance, the extension appears to be a differential operator and classical tools from PDEs allow to get (or recover) several results on the operator $L^s$ (for $0<s<1$) such as regularity estimates and fine properties of solutions of an associated PDE. Functions of $L$ are of course multipliers in the sense of harmonic analysis and connections can be made with well-known results such as H\"ormander-Mikhlin theorems. On the other hand, powers of $L$ are a subclass of generators of Levy processes and some results which can be proved via probabilistic techniques can be recovered through PDE ones.

We now describe more precisely what we mean by extension. A classical result about the square root of the Laplace operator is the following:  if $u(x,y)$ is a harmonic function in the upper half-space  $\R^n\times \R^+$ with boundary value $f(x) = u(x,0)$, then under certain conditions on $u$, we have
$$ -\sqrt{-\Delta}f(x) = \frac{\partial}{\partial y} u(x,0).$$
 For $0<s<1$, the fractional Laplacian of a function $f:\R^n\rightarrow \R$ is defined via Fourier transform on the space of tempered distributions as 
$$
\widehat{(-\Delta)^s}f(\xi) = |\xi|^{2s}\widehat{f(\xi)}.
$$
As L. Caffarelli and L. Silvestre showed in \cite{Caffarelli_2007}, for a function $f:\R^n\rightarrow\R$, a solution $u$ to the following equation
\begin{align*}
    \begin{cases}
    \Delta_x u + \frac{1-2s}{y}u_y + u_{yy} = 0\\
    u(x,0) = f(x).
    \end{cases}
\end{align*}
can be given by a Poisson formula and one has the Dirichlet-to-Neumann condition 
$$
-(-\Delta)^s f(x) = C \lim_{ y \rightarrow 0^+} y^{1-2s}u_y(x,y)
$$
with the constant $C$ depends on the dimension $n$ and $s$. This allows to realize some powers of the Euclidean laplacian in terms of traces of a differential operator in the upper half-space. Written in divergence form the previous equation involves the weight $y^{1-2s}$ which belongs to the class $A_2$. As a consequence, one can apply the theory developed in \cite{2019arXiv191105619B, FKS,FJK,FJK2} to derive an Harnack inequality, a boundary Harnack principle and other results for equations of the type $(-\Delta)^s f(x)=0$.

In \cite{stingatorrea}, P. R. Stinga and J. L. Torrea develop a framework for a general non-negative self-adjoint operator $L$ on $L^2(\Omega, d\eta)$, with $\Omega$ being an open set in $\R^n$ and $d\eta$ a positive measure. It is remarkable that they applied the spectral theorem and semi-group theory instead of the Fourier transform, which requires much less structure on the ambient space than the Euclidean space. As a direct application of their framework, one can get the previously described realizations of $L^s$ on Riemannian and Sub-Riemannian manifolds under classical geometric assumptions such as polynomial volume for instance. 

M. Kwasnicki and J. Mucha 
\cite{Kwasnicki2018}
discussed the extension problem for complete Berstein functions of the Laplacian, other than just fractional powers. Their argument is based on the Fourier transform again and the Krein's spectral theory of strings. Krein's theory provides a one to one correspondence between the non-negative locally integrable measures (Krein's strings) and the complete Bernstein functions. 

One of the goals of the present paper is to consider the extension problem on rather general Dirichlet spaces. This allows to give a unified theory of the previously known results; but also to deal with new operators $L$ which were not considered before. Then invoking results in the literature for the extended PDE, one gets new results about regularity of solutions of some PDEs, in the same spirit as the ones described above. It is important here to notice that Dirichlet spaces are the natural metric spaces for which such extension theory holds, up to additional assumptions of course on the space if one wants to get additional results on the solutions of specific equations.  We refer the reader to \cite{DirichletFormsandSymmetricMarkovProcesses} for an extensive study of the theory of  Dirichlet spaces. In this general framework the Fourier transform is not always available and as a consequence we will adopt the strategy in \cite{stingatorrea} based on semi-groups. 

Due to the versatile nature of the semi-group approach of Stinga and Torrea, one needs very little assumptions on the Dirichlet space to realize the powers of the generator $L$ as the Dirichlet-to-Neumann operator of a suitable extension. On the contrary, getting fine properties  of the solutions (existence included actually) of the equation on the extension requires much more on the underlying Dirichlet space. 

The framework we adopt here covers the following geometries :

\begin{itemize}
\item Complete Riemannian manifolds with non-negative Ricci curvature or more generally RCD$(0,\infty)$ spaces in the sense of Ambrosio-Gigli-Savar\'e \cite{ambrosio2014}, 
\item Carnot groups and other complete sub-Riemannian manifolds satisfying a generalized curvature dimension inequality (see \cite{BAUDOIN20122646,BK}), 
\item Doubling metric measure spaces that support a $2$-Poincar\'e inequality with respect to the upper gradient
structure of Heinonen and Koskela (see~\cite{heinonen_koskela_shanmugalingam_tyson_2015,KOSKELA20142437,Koskela2012}).
\item Metric graphs with bounded geometry (see \cite{Haeseler}).
\item Abstract Wiener spaces are Dirichlet spaces (see \cite{bogachev}). 
\end{itemize}

The previous items concern mainly the extension property. In some cases, it was known to hold like on the Euclidean case with positive measure \cite{Caffarelli_2007,stingatorrea}, Heisenberg groups \cite{FF}, Riemannian manifolds with curvature assumptions \cite{stingatorrea}, abstract Weiner spaces and Gauss spaces \cite{NPS1,NPS2} and some variations of them like in bounded domains.

\begin{remark}
When this paper was finished, we have been aware that S. Eriksson-Bique, G. Giovannardi, R. Korte, N. Shanmugalingam and G. Speight obtained similar results in the context of metric measure spaces endowing with an upper gradient. In this more favorable setting than ours, they could get further regularity properties of the PDEs under consideration.  
\end{remark}

\section{Preliminaries on Dirichlet spaces}\label{Dirichlet spaces}


Here we provide an overall introduction to Dirichlet spaces. One can refer to the book of \cite{DirichletFormsandSymmetricMarkovProcesses}
for more details.
Let $(X,d)$ be a locally compact metric space equipped with a Radon measure $\mu$ supported on $X$. Let $(\E,\F = \mathcal{D}(\E))$ be a densely defined, symmetric bilinear form on $\LX$. Note that 
$$(u,v)_\F = (u,v)_{\LX} + \E(u,v)$$
is a  inner product on $\F$. 
Then we can define the norm on $\F$ by Cauchy-Schwartz inequality, 
$$\N{u}_{\F} = \brak{\E(u,u) + \N{u}_{\LX}^2}^{1/2}.$$
We say $\E$ is $closed$ if $\F$ is complete with respect to the norm $\N{\cdot}_\F$.
Given $\E$ is closed, we say it is $Markovian$ if 
$$
u \in \F, \,v \text{ is a normal contraction of  } u \Rightarrow v \in \F, \,\E(v,v) \leq \E(u,u).$$
Here a function $v$ is called  a  $normal\  contraction$ of a function $u$, if 
$$
\abs{v(x)-v(y)} \leq \abs{u(x)-u(y)},\ \forall x,y\in X, \ \abs{v(x)}\leq \abs{u(x)},\ \forall x \in X.
$$
\begin{defi}
We say $(\mathcal{E},\mathcal{F} = \mathcal{D}(\mathcal{E}))$ is a Dirichlet form on $L^2(X,\mu)$, if $\E$ is a densely defined, closed, symmetric and Markovian bilinear form on $L^2(X,\mu)$. 
\end{defi}

By the following theorem in \cite[Theorem 1.3.1]{DirichletFormsandSymmetricMarkovProcesses}, we can define the generator for a Dirichlet form.
\begin{defi}
There is a one to one correspondence between the family of closed symmetric forms $\E$ on a Hilbert space $H$and the family of non-positive definite self-adjoint operators $L$ on $H$. The correspondence is determined by
\begin{align*}
    \begin{cases}
    D\brak{\E} = D\brak{\sqrt{-L}}\\
    \E(u,v) = (\sqrt{-L}u,\sqrt{-L}v)
    \end{cases}
\end{align*}
$L$ is called the generator of the form $\E$.
\end{defi}

In the classical  Euclidean case \cite{Caffarelli_2007}, $L$ is the Laplacian operator, and $\E(u,v) = \int_{\R^n} \nabla u \cdot \nabla v \,dx$.

In the following sections, we will be focusing on the generators of the Dirichlet forms. Let us consider a non-positive definite self-adjoint operators $L$ on a Hilbert space $H$. In our case, $H$ will be $\LX$. By the spectral theorem, there exists a unique spectral family $dE(\lambda)$, such that 
$$ -L = \int_0^\infty \lambda dE(\lambda).$$
This formula is understood in the sense that, for any functions $f,g\in D\brak{L}$, we have 
$$\ang{-Lf, g} = \int_0^\infty \lambda dE_{f,g}(\lambda).$$
In particular, for any non-negative continuous function $\phi$ on $[0,\infty)$, we can define
\begin{align}
    \begin{cases}
    \phi(-L) = \int_0^\infty \phi(\lambda)dE(\lambda),\\
    D\brak{\phi(-L)} = \crl{u\in H : \int _0 ^\infty \phi(\lambda)^2 dE_{u,u}(\lambda)<\infty}. 
    \end{cases}
\end{align}

\section{Extension theorem  on Dirichlet spaces}\label{frac power}

Let $L$ be a non-positive symmetric operator defined on $D(L)$ generating the Dirichlet form $\mathcal E$. 
The heat semigroup associated to $L$ will be denoted by $P_t$. We assume that \underline{$L$ has no spectral gap}. 

\subsection{Fractional powers}

Here we consider $(-L)^s$, the fractional power of $L$, where $ 0 < s < 1$. It can be defined by the spectral theorem,
$$ (-L)^s = \int_0^\infty \lambda^s dE(\lambda).$$
Similarly to the result of \cite{stingatorrea}, we can extend the operator into a higher dimension space $X\times \R$, as it is shown in the following lemmas. We compute now powers of the generator $L$. 
The spectral theorem yields that for $f \in \mathcal{D}((-L)^s)$, with $s \in (0,1)$
\[
(-L)^s f  =\frac{1}{\Gamma(-s)} \int_0^{+\infty} (P_t f -f) \frac{dt}{t^{1+s}}
\]

If $f \in C(X) \cap \mathcal{D}(-L)$ and $Lf \in L^\infty(X,\mu)$, this expression can be interpreted pointwise everywhere. Indeed, one has 
\[
P_t f(x) -f(x)=\int_0^t L P_s f(x) ds=\int_0^t  P_s L f(x) ds
\]
so that 
\[
\left| P_t f(x) -f(x) \right| \le t \| Lf \|_\infty
\]
Thus, 
\begin{align*}
\int_0^{+\infty} | P_t f(x) -f(x)| \frac{dt}{t^{1+s}}&= \int_0^{1} | P_t f(x) -f(x)| \frac{dt}{t^{1+s}}+ \int_{1}^{+\infty} | P_t f(x) -f(x)| \frac{dt}{t^{1+s}} \\
 & \le C_1 \| Lf \|_\infty + C_2 \| f \|_2
\end{align*}

Note that we also have

\[
P_t f(x) -f(x)=\int_X p_t(x,y) (f(y)-f(x)) d\mu(y)
\]
So,
\[
(-L)^s f (x) =\int_X K(x,y) (f(y)-f(x)) d\mu(y)
\]
with
\[
K(x,y)=\int_0^{+\infty} p_t(x,y)  \frac{dt}{t^{1+s}}
\]
On Dirichlet spaces endowed with a  doubling measure  and a $2$-Poincar\'e inequality, the following Gaussian bounds for the heat kernel hold
\[
 p_t (x,y) \simeq   C\, \frac{e^{-c d(x,y)^2/t}}{\mu(B(x,\sqrt{t}))}
\]
Therefore, if we assume maximal volume growth, i.e. 
\[
\mu(B(x,r)) \ge C r^n
\]
we get
\[
K(x,y) \simeq \frac{1}{d(x,y)^{n+2s}}. 
\]

 Along with $L$ and, for any $-1<a<1$ we consider the Bessel operator
\begin{equation}\label{Ba}
\mathcal B_a = \frac{\partial^2 }{\partial y^2} + \frac{a}{y} \frac{\partial }{\partial y},
\end{equation}
on the whole line $\R$ endowed with the measure $d\nu_a(y) = |y|^a dy$.
Note that the correspondence between $w(y) = |y|^a$ and $\psi(\lambda) = \lambda ^ s$ is shown in the Krein's theory in Section \ref{psiL}.

Now let us consider the space $X_a = X \times \R$ with the measure $d\mu \otimes d\nu_a$. We will also denote by $X_a^+ = X \times (0,\infty)$, and by $X_a^- = X \times (-\infty,0)$.

\begin{lem}\label{PoiF}
    Let  $f \in \mathcal{D}((-L)^s)$. Then the $L^2$-weak solution of the extension equation
    \begin{equation}\label{extL}
    \begin{cases}
    L_a U = (L + \mathcal B_a) U = 0\ \ \ \ \ \ \ \text{in}\ X_a^+,
    \\
    U(\cdot ,0) = f ,
    \end{cases}
    \end{equation}
    where $a = 1- 2s$, is given by the following function
    \[
    U (\cdot ,y)=\frac{1}{\Gamma(s)} \int_0^{+\infty}   (P_t (-L)^s f) e^{-\frac{y^2}{4t}} \frac{dt}{t^{1-s}}
    \]
Moreover, we have the Poisson formula
\[
U (\cdot ,y)= \frac{y^{2s}}{2^{2s}\Gamma(s) }\int_0^{+\infty} (P_t f) e^{-\frac{y^2}{4t}} \frac{dt}{t^{1+s}}
    \]
Here the function $U$ is called the $s$-Harmonic extension of $f$.
\end{lem}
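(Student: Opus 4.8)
The plan is to diagonalize the extension operator $L_a = L + \mathcal B_a$ via the spectral resolution $-L = \int_0^\infty \lambda\,dE(\lambda)$, which turns \eqref{extL} into a one‑parameter family of Bessel ordinary differential equations in $y$, solve those explicitly in terms of the modified Bessel function $K_s$, and then recognize the two claimed integral representations as subordination formulas for the resulting spectral multiplier. Concretely, \textbf{Step 1} is to look for the solution in the form $U(\cdot,y) = \Phi(y,-L)f := \int_0^\infty \Phi(y,\lambda)\,dE(\lambda)f$. Applying $L$ and $\mathcal B_a$ and using the spectral theorem, $(L+\mathcal B_a)U = 0$ holds provided that for each $\lambda>0$
\[
\partial_y^2 \Phi(y,\lambda) + \frac{a}{y}\,\partial_y\Phi(y,\lambda) - \lambda\,\Phi(y,\lambda) = 0 \quad\text{on }(0,\infty),\qquad \Phi(0,\lambda)=1,
\]
together with boundedness of $y\mapsto\Phi(y,\lambda)$, which selects the solution decaying at $y=+\infty$. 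With $a = 1-2s$ the general solution is $y^s\big(c_1 I_s(\sqrt\lambda\,y) + c_2 K_s(\sqrt\lambda\,y)\big)$; boundedness forces $c_1=0$, and the normalization at $y=0$, using $K_s(z)\sim 2^{s-1}\Gamma(s)\,z^{-s}$ as $z\to 0^+$, gives $\Phi(y,\lambda) = \frac{2^{1-s}}{\Gamma(s)}(\sqrt\lambda\,y)^s K_s(\sqrt\lambda\,y)$. From $\frac{d}{dz}\big(z^sK_s(z)\big) = -z^s K_{s-1}(z) < 0$ and $z^sK_s(z)\to 0$ as $z\to\infty$ one reads off that $z\mapsto z^sK_s(z)$ decreases from $2^{s-1}\Gamma(s)$ to $0$, hence $0\le\Phi(y,\lambda)\le 1$.

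\textbf{Step 2: the two integral formulas.} Insert into $\Phi(y,\lambda)$ the classical Bessel integral
\[
\int_0^{+\infty} e^{-\lambda t - y^2/(4t)}\,t^{\nu-1}\,dt = 2\Big(\frac{y^2}{4\lambda}\Big)^{\nu/2} K_\nu\big(y\sqrt\lambda\,\big), \qquad \lambda,y>0,\ \nu\in\R,
\]
valid for all real $\nu$ since $K_{-\nu}=K_\nu$. Taking $\nu=-s$ gives $\Phi(y,\lambda) = \frac{y^{2s}}{2^{2s}\Gamma(s)}\int_0^{+\infty} e^{-\lambda t}e^{-y^2/(4t)}\,t^{-1-s}\,dt$, while taking $\nu=s$ and multiplying by $\lambda^s$ gives $\Phi(y,\lambda) = \frac{\lambda^s}{\Gamma(s)}\int_0^{+\infty} e^{-\lambda t}e^{-y^2/(4t)}\,t^{s-1}\,dt$. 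Substituting $\lambda\mapsto -L$, writing $P_t=\int_0^\infty e^{-t\lambda}\,dE(\lambda)$ and $(-L)^sP_t f = \int_0^\infty \lambda^s e^{-t\lambda}\,dE(\lambda)f$, and interchanging the spectral integral with the $t$‑integral converts these identities into the two displayed formulas for $U(\cdot,y)$; this simultaneously proves that the two formulas agree. For the Poisson (second) formula the interchange is justified by absolute convergence: near $t=0$ the factor $e^{-y^2/(4t)}$ controls everything, and near $t=\infty$ one uses $\N{P_tf}_{\LX}\le \N{f}_{\LX}$ together with $\int_1^\infty t^{-1-s}\,dt<\infty$. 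For the first formula the $t$‑integral is only conditionally convergent in $\LX$ at $t=\infty$ (since $t^{s-1}$ is not integrable there); here one invokes $f\in\mathcal D((-L)^s)$: the truncated multiplier $\lambda\mapsto \lambda^s\int_0^N e^{-\lambda t}e^{-y^2/(4t)}t^{s-1}\,dt$ is bounded on $(0,\infty)$ by $\Gamma(s)$ uniformly in $N$ and converges pointwise as $N\to\infty$, so dominated convergence against $dE_{f,f}$ yields the $\LX$‑limit.

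\textbf{Step 3: the extension equation, the energy, and the trace.} Fix $y>0$. Since $\Phi(y,\cdot)$, $\partial_y\Phi(y,\cdot)$, $\partial_y^2\Phi(y,\cdot)$ and $\lambda\mapsto\lambda\Phi(y,\lambda)$ are bounded on $(0,\infty)$ with bounds locally uniform in $y$, one has $U(\cdot,y)\in\mathcal D(L)$, may differentiate under the spectral integral in $y$, and the ODE of Step 1 gives
\[
\partial_y^2 U(\cdot,y) + \frac{a}{y}\,\partial_y U(\cdot,y) = \int_0^{+\infty} \lambda\,\Phi(y,\lambda)\,dE(\lambda)f = -L\,U(\cdot,y)\quad\text{in }\LX,\ y>0.
\]
Multiplying by $|y|^a$ and using $y^a\mathcal B_a U = \partial_y\big(y^a\partial_y U\big)$ (so that $y^a\partial_y U$ stays bounded as $y\to 0^+$, converging up to a constant to $(-L)^sf$), then pairing with $\phi\in C_c^\infty(X_a^+)$ and integrating $d\nu_a$ in $y$, yields the weak form of \eqref{extL}. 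A direct spectral computation shows $U$ has finite extension energy, $\int_{X_a^+}\big(|\partial_y U|^2 + \text{(}\mathcal E\text{-density)}\big)\,d\mu\otimes d\nu_a = c_s\int_0^\infty \lambda^s\,dE_{f,f}(\lambda) < \infty$ (the $\lambda$‑integral $\int_0^\infty y^a\Phi(y,\lambda)^2\,dy$ and its $\partial_y$‑analogue scale as $\lambda^{s-1}$, using $a>-1$), which is exactly where $f\in\mathcal D((-L)^s)$ is used, and which also gives uniqueness of the $\LX$‑weak solution in the finite‑energy class with trace $f$. Finally, $0\le\Phi(y,\lambda)\le 1$ and $\Phi(y,\lambda)\to 1$ as $y\to 0^+$ for each $\lambda>0$, so dominated convergence against $dE_{f,f}$ gives $\N{U(\cdot,y)-f}_{\LX}^2 = \int_0^\infty |\Phi(y,\lambda)-1|^2\,dE_{f,f}(\lambda)\to 0$, i.e. $U(\cdot,0)=f$ as an $\LX$‑trace.

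\textbf{Main difficulty.} The delicate point is the absence of a spectral gap: the spectral measure $dE_{f,f}$ may accumulate at $\lambda=0$, which is exactly why the first representation converges only conditionally at $t=+\infty$ and must be handled by dominated convergence on the spectral side rather than by a bare Fubini argument, and why the small‑$\lambda$ behaviour of the multiplier $(\sqrt\lambda\,y)^sK_s(\sqrt\lambda\,y)$ — in particular the $\lambda^s$ term governing the Neumann datum $(-L)^sf$ and the finiteness of the extension energy — has to be tracked carefully; this is precisely where the hypothesis $f\in\mathcal D((-L)^s)$ enters. The remaining steps (the Bessel integral identity, differentiation under the spectral integral for $y>0$, the weak formulation) are routine once the spectral reduction is set up.
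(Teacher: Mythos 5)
Your argument is correct, but it takes a genuinely different route from the paper's. The paper follows the Stinga--Torrea scheme: it takes the displayed $t$-integral as the definition of $U$, works with truncations $U_R$ paired against $g\in L^2(X;\mu)$, shows membership in $\mathcal D(L)$ and differentiability in $y$ by direct spectral computations, obtains the equation through an integration by parts in $t$ (using $\partial_t\bigl(e^{-y^2/4t}t^{s-1}\bigr)$), and gets the Poisson formula and the boundary condition by the change of variables $t=y^2/(4r\lambda)$ --- no special functions ever appear. You instead diagonalize first: you solve the spectrally reduced Bessel ODE explicitly, identify the bounded multiplier $\Phi(y,\lambda)=\frac{2^{1-s}}{\Gamma(s)}(\sqrt{\lambda}\,y)^s K_s(\sqrt{\lambda}\,y)$ with $0\le\Phi\le 1$, and then read off both displayed formulas as subordination representations of $\Phi$ via the classical integral for $K_\nu$. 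What your route buys: the two formulas are manifestly representations of one and the same operator $\Phi(y,-L)$, you get the contraction bound $\|U(\cdot,y)\|_{L^2}\le\|f\|_{L^2}$ and strong (not merely weak) $L^2$ convergence of the trace, and the exact place where $f\in\mathcal D((-L)^s)$ is needed (conditional convergence of the first formula at $t=\infty$, finiteness of the extension energy, the Neumann datum) is isolated cleanly; the cost is reliance on Bessel identities (the $K_\nu$ asymptotics, $\frac{d}{dz}(z^\nu K_\nu)=-z^\nu K_{\nu-1}$, the integral representation), which the paper's elementary computation avoids. One small caveat: your closing claim of uniqueness of the finite-energy weak solution is asserted rather than proved, but since the paper's proof also only verifies that the stated formula is a solution, nothing required by the statement is missing.
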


\begin{proof}
    1.\ We first show that for all $y>0$,  $U(\cdot, y) \in L^2(X;\mu)$, and for all $g \in L^2(X;\mu)$,

    $$ \left< U(\cdot \ , y), \ g(\cdot)\right>_{L^2(X;\ \mu)} = \frac{1}{\Gamma(s)}\int_0^\infty \left<P_t(-L)^sf, g\right>_{L^2(X;\mu)} e^{-\frac{y^2}{4t}}\frac{dt}{t^{1-s}}.$$

    For each $R>0$, we define 
    $$ U_R(x,y) = \frac{1}{\Gamma(s)}\int_0^R \left<P_t(-L)^sf, g\right>_{L^2(X;\mu)} e^{-\frac{y^2}{4t}}\frac{dt}{t^{1-s}}.$$
    Since $f \in \mathcal{D}((-L)^s)$, we have that $P_t(-L)^sf \in L^2(X;\ \mu)$, hence by Bochner theorem, $U_R$ is well-defined. Hence

    \begin{align*}
        \left< U_R(\cdot \ , y), \ g(\cdot)\right>_{L^2(X;\ \mu)} & = \frac{1}{\Gamma(s)}\int_0^R \left<P_t(-L)^sf, g\right>_{L^2(X;\mu)} e^{-\frac{y^2}{4t}}\frac{dt}{t^{1-s}}\\
        & = \frac{1}{\Gamma(s)}\int_0^R \int_0^\infty 
        e^{-t\lambda} \lambda^s dE_{f,g}(\lambda) 
        e^{-\frac{y^2}{4t}}\frac{dt}{t^{1-s}}\\
        & = \frac{1}{\Gamma(s)}\int_0^\infty \int_0^R e^{-t\lambda} (t\lambda)^s
        e^{-\frac{y^2}{4t}}\frac{dt}{t} dE_{f,g}(\lambda)\\
        & = \frac{1}{\Gamma(s)}\int_0^\infty \int_0^{R\lambda} e^{-r} r^s
        e^{-\frac{y^2 \lambda }{4r}}\frac{dr}{r} dE_{f,g}(\lambda).\\
    \end{align*}
    The change of integration follows from the integrability, and the last equality follows from the change of variable $r = t\lambda$. Hence we have
    \begin{align*}
        \left|\left<U_R(\cdot, y), \ g(\cdot)\right>_{L^2(X;\ \mu)} \right| &\leq
        \frac{1}{\Gamma(s)}\int_0^\infty \int_0^\infty e^{-r} r^s
        \frac{dr}{r} d\left|E_{f,g}(\lambda)\right|\\
        & = \frac{1}{\Gamma(s)} \int_0^\infty e^{-r} r^s
        \frac{dr}{r} \int_0^\infty d\left|E_{f,g}(\lambda)\right|\\
        & \leq \N{f}_{L^2(X; \mu)} \N{g}_{L^2(X; \mu)}.
    \end{align*}
    Therefore, for each fixed $y >0$, $U_R(\cdot, y)$ is in $L^2(X; \mu)$ and 
    $$ \N{U_R(\cdot, y)}_{L^2(X; \mu)} \leq \N{f}_{L^2(X; \mu)}.$$
    
    And by the similar computation, for some $R_2 > R_1 > 0$, 
    \begin{align*}
        \abs{\ang{U_{R_1}(\cdot, y), g} - \ang{U_{R_2}(\cdot, y), g}}
        & \leq 
        \frac{1}{\Gamma(s)} \int_0^\infty e^{-r} r^s
        \frac{dr}{r} \int_{R_1}^{R_2} d\left|E_{f,g}(\lambda)\right| \to \ 0
    \end{align*}
    as $R_1, \ R_2 \to 0$. Hence there exist a Cauchy sequence of bounded operators $\crl{U_{R^j}(\cdot, \ y)}_{j \in \mathbb{N}}$ in $L^2(X; \ \mu)$, and it converge to $U(\cdot, \ f)$ which is defined earlier in  weakly in $L^2(X;\ \mu)$ as $R^j \to \infty$.  Moreover, by the standard dominated convergence theorem, 
    \begin{align*}
        \left< U(\cdot \ , y), \ g(\cdot)\right>_{L^2(X;\ \mu)} &= 
        \lim_{R^j \to \infty} \left< U_{R^j}(\cdot \ , y), \ g(\cdot)\right>_{L^2(X;\ \mu)}\\
        &=
        \lim_{R^j \to \infty}
        \frac{1}{\Gamma(s)}\int_0^\infty \int_0^{R^j} e^{-t\lambda} (t\lambda)^s
        e^{-\frac{y^2}{4t}}\frac{dt}{t} dE_{f,g}(\lambda)\\
        &=
        \frac{1}{\Gamma(s)}\int_0^\infty \int_0^\infty e^{-t\lambda} (t\lambda)^s
        e^{-\frac{y^2}{4t}}\frac{dt}{t} dE_{f,g}(\lambda)\\
        &=
        \frac{1}{\Gamma(s)}\int_0^\infty \int_0^\infty e^{-t\lambda} \lambda^s dE_{f,g}(\lambda)
        e^{-\frac{y^2}{4t}}\frac{dt}{t^{1-s}}\\
        &=
        \frac{1}{\Gamma(s)}\int_0^\infty 
        \left<P_t(-L)^sf, g\right>_{L^2(X;\mu)}
        e^{-\frac{y^2}{4t}}\frac{dt}{t^{1-s}}.
    \end{align*}
    Hence we get the desired formula.
    
    2.\ Next we show that $U(\cdot, y)\in \text{Dom}(L)$, that is, 
    $$
    \lim_{r\to 0^+} \ang{
    \frac{e^{rL} U(\cdot, \ y) - U(\cdot, \ y)}{r}, \ g
    }_{L^2(X;\ \mu)}
    $$
    exists for all $g\in \LX.$
    
    Since $P_r = e^{rL}$ is self adjoint, 
    \begin{align*}
        \ang{P_r U(\cdot, \ y),\ g}_{\LX} &= \ang{ U(\cdot, \ y),\ P_r g}_{\LX}\\
        &=
        \frac{1}{\Gamma(s)}\int_0^\infty 
        \left<e^{tL}(-L)^sf, e^{rL}g\right>_{L^2(X;\mu)}
        e^{-\frac{y^2}{4t}}\frac{dt}{t^{1-s}}\\
        &=
        \frac{1}{\Gamma(s)}\int_0^\infty 
        \left<e^{(t+r)L}(-L)^sf, g\right>_{L^2(X;\mu)}
        e^{-\frac{y^2}{4t}}\frac{dt}{t^{1-s}}.\\
    \end{align*}
    That implies
    \begin{align*}
        \ang{\frac{e^{rL} U(\cdot, \ y) - U(\cdot, \ y)}{s}, \ g
        }_{L^2(X;\ \mu)} &= 
        \frac{1}{\Gamma(s)}\int_0^\infty 
        \ang{
        \frac{e^{(r+t)L}(-L)^s f-e^{tL}(-L)^s f}{r}, g
        }_{L^2(X;\mu)}
        e^{-\frac{y^2}{4t}}\frac{dt}{t^{1-s}}\\
        &=
        \frac{1}{\Gamma(s)}\int_0^\infty 
        \int_0^\infty
        \frac{e^{-(r+t)\lambda}\lambda^s-e^{-t\lambda}\lambda^s}{r}
        dE_{f,g}(\lambda)
        e^{-\frac{y^2}{4t}}\frac{dt}{t^{1-s}}\\
        &=
        \frac{1}{\Gamma(s)}\int_0^\infty 
        \int_0^\infty
        \frac{e^{-(r+t)\lambda}\lambda^s-e^{-t\lambda}\lambda^s}{r}
        e^{-\frac{y^2}{4t}}\frac{dt}{t^{1-s}}
        dE_{f,g}(\lambda)\\
        &\underrightarrow{r \to 0^+}\ 
        \frac{1}{\Gamma(s)}\int_0^\infty 
        \int_0^\infty
        \partial_r(e^{t\lambda})\lambda^s
        e^{-\frac{y^2}{4t}}\frac{dt}{t^{1-s}}
        dE_{f,g}(\lambda)\\
        &=
        \frac{1}{\Gamma(s)}\int_0^\infty 
        \ang{
        Le^{tL}(-L)^s f,\ g
        }_{L^2(X;\ \mu)}
        e^{-\frac{y^2}{4t}}\frac{dt}{t^{1-s}}.\\
    \end{align*}
    
    3. The boundary condition holds. By using the result from step 1 and change of variables, we can have that for all 
    $g \in L^2(X;\ \mu)$,
    \begin{align*}
        \ang{U(\cdot, y), g(\cdot)}_\LX = &
        \frac{1}{\Gamma(s)}\int_0^\infty \int_0^\infty 
        e^{-r}r^s
        e^{-\frac{y^2 \lambda}{4r}}\frac{dr}{r}
        dE_{f,g}(\lambda)\\
        \underrightarrow{y\to 0+}&\ 
        \frac{1}{\Gamma(s)}\int_0^\infty \int_0^\infty 
        e^{-r}r^s
        e^{-\frac{y^2 \lambda}{4r}}\frac{dr}{r}
        dE_{f,g}(\lambda)\\
        =&
        \ang{f,\ g}_\LX
    \end{align*}
    
    4. Now we are left to show that $U$ satisfied the equation (\ref{extL}). 
    For all $g \in L^2(X;\ \mu)$,
    \begin{align*}
        \lim_{h\to 0^+}\ang{
        \frac{U(\cdot, \ y + h) - U(\cdot, \ y)}{h},\ g(\cdot)
        }_\LX
        &=
        \frac{1}{\Gamma(s)}\int_0^\infty 
        \ang{e^{tL}(-L)^s f, \ g}_\LX 
        \partial_y(e^{-\frac{y^2}{4t}})
        \frac{dt}{t^{1-s}}\\
        &=
        \ang{
        \frac{1}{\Gamma(s)}\int_0^\infty 
        e^{tL}(-L)^s f \cdot
        \partial_y(e^{-\frac{y^2}{4t}})
        \frac{dt}{t^{1-s}},\ g
        }_\LX
    \end{align*}
    The first equality follows from the dominated convergence theorem, and the second holds by checking the integrability as in step 1. Hence
    $$ U_y(x, y) = \frac{-1}{\Gamma(s)}\int_0^\infty 
        e^{tL}(-L)^s f(x) \cdot
        \frac{
        ye^{-\frac{y^2}{4t}}
        }{2t}
        \frac{dt}{t^{1-s}}.$$
    Also, we can have $U_{yy}$ by similar computation,
    $$ U_{yy}(x, y) = \frac{1}{\Gamma(s)}\int_0^\infty 
        e^{tL}(-L)^s f(x) \cdot
        \brak{
        \frac{y^2}{4t^2} - \frac{1}{2t}
        }
        e^{-\frac{y^2}{4t}}
        \frac{dt}{t^{1-s}}.$$
    Hence for all $g \in L^2(X;\ \mu)$,
    \begin{align*}
        \ang{\mathcal B_a U,\ g} &= \ang{U_{yy} + \frac{1-2y}{y} U_y,\ g}\\
        &=
        \frac{1}{\Gamma(s)}\int_0^\infty 
        \ang{e^{tL}(-L)^s f, \ g}_\LX 
        \brak{
        \frac{y^2}{4t^2}-\frac{1}{2t}+\frac{1-2s}{y}\brak{-\frac{y}{2t}}}
        e^{-\frac{y^2}{4t}}
        \frac{dt}{t^{1-s}}\\
        &=
        \frac{1}{\Gamma(s)}\int_0^\infty 
        \ang{P_t(-L)^s f, \ g}_\LX 
        \brak{
        \frac{y^2}{4t^2}+\frac{s-1}{2t}
        }
        e^{-\frac{y^2}{4t}}
        \frac{1}{t^{1-s}}dt\\
        &=
        \frac{1}{\Gamma(s)}\int_0^\infty 
        \ang{P_t(-L)^s f, \ g}_\LX 
        \partial_t\brak{
        e^{-\frac{y^2}{4t}}
        \frac{1}{t^{1-s}}}dt
    \end{align*}
    And an integration by parts yields that,
    \begin{align*}
        \ang{\mathcal B_a U,\ g} 
        &=
        -\frac{1}{\Gamma(s)}\int_0^\infty 
        \partial_t
        \edg{
        \int_0^\infty
        e^{-t\lambda} \lambda^s dE_{f,g}(\lambda)
        }
        e^{-\frac{y^2}{4t}}
        \frac{dt}{t^{1-s}}\\
        &=
        \frac{1}{\Gamma(s)}\int_0^\infty 
        \int_0^\infty
         \lambda
        e^{-t\lambda} \lambda^s 
        e^{-\frac{y^2}{4t}}
        \frac{dt}{t^{1-s}}
        dE_{f,g}(\lambda)
        \\
        &=
        \ang{
        L\ \frac{1}{\Gamma(s)}\int_0^\infty
        e^{-tL}(L^sf)
        e^{-\frac{y^2}{4t}}
        \frac{dt}{t^{1-s}}, g}_\LX = \ang{LU(\cdot, y),g(\cdot)}_\LX.
    \end{align*}
    
\item 5. We are left to show the Poisson Formula. Again by change of variable $t = \frac{y^2}{4r\lambda}$, we get
\begin{align*}
    \ang{
    U(\cdot, y), g(\cdot)
    }_\LX &= 
    \frac{1}{\Gamma(s)}\int_0^\infty \int_0^\infty 
    e^{-t\lambda} (t\lambda) ^s 
    e^{-\frac{y^2}{4t}}
    \,\frac{dt}{t}\,dE_{f,g}(\lambda)\\
    &= 
    \frac{1}{\Gamma(s)}\int_0^\infty \int_0^\infty 
     e^{-\frac{y^2}{4r}} \brak{\frac{y^2}{4r}} ^s e^{-r\lambda}
    \,\frac{dr}{r}\,dE_{f,g}(\lambda)\\
    &=
    \frac{y^{2s}}{4^s \Gamma(s)}\int_0^\infty
    \ang{e^{-tL}f,g}_\LX
    e^{-\frac{y^2}{4r}}  
    \,\frac{dr}{r^{1+s}}\\
    &=
    \ang{\frac{y^{2s}}
    {4^s \Gamma(s)}\int_0^\infty
    e^{-tL}f
    e^{-\frac{y^2}{4r}}  
    \,\frac{dr}{r^{1+s}}
    ,g}_\LX
\end{align*}
The last equality follows from the Bochner's Theorem.
\end{proof}
Here $U$ is a solution to the equation (\ref{extL}) in $X_a$ with Dirichlet initial condition $U(\cdot, 0) = f(\cdot).$ The value of $(-L)^sf$  can be transformed to a Neumann initial condition of $U$.

\begin{lem}\label{dn}
 Let  $f \in \mathcal{D}((-L)^s)$. One can recover $(-L)^sf  $ by the following weighted Dirichlet-to-Neumann relation: 
\begin{equation}
(-L)^s f = 
- \frac{2^{2s-1} \Gamma(s)}{\Gamma(1-s)} \underset{y\to 0^+}{\lim} y^a \frac{\partial U}{\partial y}(\cdot,y) ,
\end{equation}
where, as above, $a = 1-2s$, and the identity holds in $L^2$.
\end{lem}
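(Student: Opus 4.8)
The plan is to differentiate the representation of the $s$-harmonic extension obtained in Lemma~\ref{PoiF} and to recognize the resulting family of operators, after multiplication by $y^a$, as a spectral multiplier of $(-L)^s$ that converges to a constant multiple of the identity. From Step~4 of the proof of Lemma~\ref{PoiF} one has, for $y>0$,
\[
y^a \frac{\partial U}{\partial y}(\cdot,y) = \frac{-1}{2\Gamma(s)} \int_0^{+\infty} y^{2-2s}\, e^{-\frac{y^2}{4t}}\, P_t (-L)^s f \,\frac{dt}{t^{2-s}} .
\]
First I would check, exactly as in Step~1 of Lemma~\ref{PoiF}, that this Bochner integral converges in $\LX$, and that pairing against $g\in\LX$ and inserting the spectral resolution of $-L$ yields
\[
\ang{ y^a \tfrac{\partial U}{\partial y}(\cdot,y),\, g } = \frac{-1}{2\Gamma(s)} \int_0^{+\infty} \lambda^s \brak{ \int_0^{+\infty} y^{2-2s}\, e^{-t\lambda}\, e^{-\frac{y^2}{4t}}\, \frac{dt}{t^{2-s}} }\, dE_{f,g}(\lambda),
\]
the exchange of the two integrations being justified through the total variation of $dE_{f,g}$ and Cauchy--Schwarz, exactly as there.

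The computational heart is the inner $t$-integral. The substitution $r = y^2/(4t)$ turns it into $4^{1-s}\int_0^{+\infty} e^{-\frac{y^2\lambda}{4r}}\, e^{-r}\, r^{-s}\,dr$, which is bounded by $4^{1-s}\Gamma(1-s)$ uniformly in $y$ and $\lambda$ (the integral $\int_0^{+\infty} e^{-r} r^{-s}\,dr = \Gamma(1-s)$ converges since $0<s<1$), and which tends to $4^{1-s}\Gamma(1-s)$ as $y\to 0^+$ by dominated convergence in $r$. Thus $y^a\partial_y U(\cdot,y) = \phi_y(-L)f$ for the explicit multiplier
\[
\phi_y(\lambda) = \frac{-4^{1-s}}{2\Gamma(s)}\, \lambda^s \int_0^{+\infty} e^{-\frac{y^2\lambda}{4r}}\, e^{-r}\, r^{-s}\,dr ,
\]
and $\abs{\phi_y(\lambda)} \le \tfrac{2^{1-2s}\Gamma(1-s)}{\Gamma(s)}\,\lambda^s$ with $\phi_y(\lambda) \to -\tfrac{2^{1-2s}\Gamma(1-s)}{\Gamma(s)}\,\lambda^s$ pointwise in $\lambda$.

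To pass from this pointwise convergence of multipliers to $L^2$-convergence of the operators applied to $f$, I would argue directly with the spectral calculus: since $f\in\mathcal{D}((-L)^s)$, the function $\lambda\mapsto\lambda^s$ lies in $L^2(dE_{f,f})$, so
\[
\N{ y^a \tfrac{\partial U}{\partial y}(\cdot,y) + \tfrac{2^{1-2s}\Gamma(1-s)}{\Gamma(s)}(-L)^s f }_{\LX}^2 = \int_0^{+\infty} \abs{ \phi_y(\lambda) + \tfrac{2^{1-2s}\Gamma(1-s)}{\Gamma(s)}\lambda^s }^2 dE_{f,f}(\lambda) ,
\]
which tends to $0$ as $y\to 0^+$ by dominated convergence, the integrand being dominated by a fixed multiple of $\lambda^{2s}\in L^1(dE_{f,f})$. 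Rearranging gives $(-L)^s f = -\dfrac{2^{2s-1}\Gamma(s)}{\Gamma(1-s)}\,\lim_{y\to 0^+} y^a \partial_y U(\cdot,y)$ in $\LX$, which is the claim.

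The only points requiring care are the Fubini step exchanging the $\lambda$-integral against the signed measure $dE_{f,g}$ with the $t$-integral, and the change of variables $r = y^2/(4t)$ together with the dominated convergence in $r$; both are routine once one observes that the total mass appearing is controlled by $\N{(-L)^s f}_{\LX}\N{g}_{\LX}$. The only spot where the nature of $L$ genuinely intervenes is the membership $\lambda\mapsto\lambda^s\in L^2(dE_{f,f})$, which is precisely the hypothesis $f\in\mathcal{D}((-L)^s)$ and is what makes the final dominated convergence in the spectral variable legitimate; beyond this bookkeeping I expect no essential obstacle.
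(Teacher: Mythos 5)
Your proposal is correct and follows essentially the same route as the paper: differentiate the Poisson/extension formula from Lemma~\ref{PoiF}, pair against $g$, insert the spectral resolution, change variables $r=y^2/(4t)$ to produce the factor $\int_0^\infty e^{-r}r^{-s}\,dr=\Gamma(1-s)$, and pass to the limit $y\to0^+$ by dominated convergence, arriving at the same constant $2^{1-2s}\Gamma(1-s)/\Gamma(s)$. The one (welcome) difference is your final step identifying $y^a\partial_y U(\cdot,y)=\phi_y(-L)f$ and applying dominated convergence in $dE_{f,f}$, which yields convergence in $L^2$ norm, whereas the paper's argument, pairing against each fixed $g$, only exhibits the limit weakly.
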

\begin{proof}
By the previous computation, for all $g \in L^2(X;\mu)$
\begin{align*}
    \ang{
    y^a U_y(\cdot, y), g(\cdot)
    }_\LX &= 
    \frac{1}{\Gamma(s)}\int_0^\infty 
    \ang{e^{tL}(-L)^s f, \ g}_\LX 
    y^a\frac{y}{2t}
    e^{-\frac{y^2}{4t}}
    \frac{dt}{t^{1-s}}
\end{align*}
Change the variable $t = \frac{y^2}{4r}$,
\begin{align*}
    \ang{
    y^a U_y(\cdot, y), g(\cdot)
    }_\LX =& 
    \frac{-1}{\Gamma(s)}\int_0^\infty \int_0^\infty 
    e^{-t\lambda}\lambda^s 
    \frac{y^{2-2s}}{2t}
    e^{-\frac{y^2}{4t}}
    dE_{f,g}(\lambda)
    \frac{dt}{t^{1-s}}\\
    =&
    \frac{-1}{\Gamma(s)}\int_0^\infty \int_0^\infty 
    e^{-\frac{y^2\lambda}{4r}}\lambda^s 
    dE_{f,g}(\lambda)
    \frac{2e^{-r}}{(4r)^s}dr\\
    \underrightarrow{y \to 0^+}&
    \frac{-1}{\Gamma(s)}\int_0^\infty \int_0^\infty
    \lambda^s 
    dE_{f,g}(\lambda)
    \frac{2e^{-r}}{(4r)^s}dr\\
    =&
    \frac{-1}{\Gamma(s)}2^{1-2s} \int_0^\infty 
    r^{-s}e^{-r}dr \cdot \ang{(-L)^sf, g}_\LX\\
    =&
    -\frac{\Gamma(s)2^{2s-1}}{\Gamma(1-s)}\ang{(-L)^sf, g}_\LX
\end{align*}
\end{proof}
\subsection{General weights and Functions of L}\label{psiL}

In this section, we generalize the results in the previous section to some functions of the generator $L$, which are not necessarily powers of it. We first state the following theorem (see \cite{Kwasnicki2018}). 

\begin{thm}
 Let $A$ be a Krein's string, i.e. non-negative locally integrable function on $[0,{+\infty})$. Then for every $\lambda \geq 0$, there exists a unique non-increasing function $R(z,\lambda)$ on $[0,{+\infty})$ which solves
 \begin{equation}
     \begin{cases}
        R_{zz}(z,\lambda) = \lambda A(z) R(z,\lambda),\ \text{for  } z>0\\
        R(0,\lambda) = 1,\ \text{for all } \lambda>0\\
        \lim_{z\rightarrow {+\infty}}R(z,\lambda) \geq 0.
        \end{cases}
 \end{equation}
 (with the second derivative understood in the weak sense). Furthermore, the expression
 $$
 \psi(\lambda) = -R_z(0,\lambda).$$
 defines a complete Bernstein function $\psi$, and the correspondence between $A(s)$ and $\psi(\lambda)$ is one-to-one.
\end{thm}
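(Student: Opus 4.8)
The plan is to prove the statement in three steps: (i) for each $\lambda\ge 0$, existence and uniqueness of the principal solution $R(\cdot,\lambda)$; (ii) the function $\psi(\lambda)=-R_z(0,\lambda)$ is a complete Bernstein function; and (iii) the assignment $A\mapsto\psi$ is a bijection.

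For (i), fix $\lambda\ge 0$. Since $A\ge 0$ is locally integrable, a function $R$ with locally absolutely continuous derivative solves $R_{zz}=\lambda AR$ (a.e.) with $R(0)=1$ exactly when it solves the Volterra equation $R(z)=1+R_z(0)\,z+\lambda\int_0^z(z-t)A(t)R(t)\,dt$; Picard iteration gives, for each prescribed slope, a unique global solution, entire in $\lambda$ and locally uniform in $z$. To isolate the principal solution I would exhaust $[0,\infty)$: for $b>0$ let $R_b(\cdot,\lambda)$ solve the equation on $[0,b]$ with $R_b(0)=1$, $R_b(b)=0$ (solvable since the excluded $\lambda$'s, negatives of Dirichlet eigenvalues of the truncated string, lie in $(-\infty,0)$). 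From $R_b''=\lambda AR_b\ge 0$ wherever $R_b\ge 0$ one gets that $R_b$ is convex and non-increasing on $[0,b]$ with values in $[0,1]$; a maximum-principle comparison yields $R_{b_1}\le R_{b_2}$ on $[0,b_1]$ for $b_1<b_2$, and the slopes $\psi_b(\lambda):=-R_{b,z}(0,\lambda)$ decrease in $b$ and stay $\ge 0$. Hence $R(z,\lambda):=\lim_{b\to\infty}R_b(z,\lambda)\in[0,1]$ and $\psi(\lambda):=\lim_{b\to\infty}\psi_b(\lambda)\ge 0$ exist; passing to the limit in the Volterra equation shows $R$ solves $R_{zz}=\lambda AR$ with $R(0)=1$, $R_z(0)=-\psi(\lambda)$, $R$ non-increasing and $\lim_{z\to\infty}R(z,\lambda)\ge 0$. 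Uniqueness: if $R_1,R_2$ are two such solutions with, say, $R_{1,z}(0)\le R_{2,z}(0)$, then $D=R_2-R_1$ satisfies $D''=\lambda AD$, $D(0)=0$, $D'(0)\ge 0$; were $D'(0)>0$, convexity ($D''=\lambda AD\ge 0$ while $D\ge 0$) would force $D'\ge D'(0)>0$ and $D\to+\infty$, contradicting $0\le R_1\le 1$ and $R_2\le 1$; so $D'(0)=0$ and $D\equiv 0$.

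For (ii), nonnegativity of $\psi$ is built in, and $\psi(0)=0$ since for $\lambda=0$ the only admissible (affine, non-increasing, eventually nonnegative) solution is $R\equiv 1$. For holomorphy: each $R_b(z,\cdot)$ is holomorphic on $\mathbb{C}\setminus(-\infty,0]$ (its $\lambda$-singularities are the negatives of Dirichlet eigenvalues of the truncated string, all in $(-\infty,0)$), and the Weyl nested-disk estimate bounds $\{R_b(z,\cdot)\}_b$ locally uniformly there; combined with the pointwise convergence $R_b\to R$ on $(0,\infty)$, Vitali's theorem gives local uniform convergence, so $R(z,\cdot)$ and hence $\psi=-\partial_zR(0,\cdot)$ extend holomorphically to $\mathbb{C}\setminus(-\infty,0]$. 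For the Pick property: multiplying $R_b''=\lambda AR_b$ by $\overline{R_b}$, integrating over $[0,b]$, integrating by parts and using $R_b(0)=1$, $R_b(b)=0$, $R_{b,z}(0)=-\psi_b(\lambda)$ yields $\psi_b(\lambda)=\int_0^b|R_{b,z}|^2\,dz+\lambda\int_0^b A|R_b|^2\,dz$, so $\operatorname{Im}\psi_b(\lambda)=\operatorname{Im}(\lambda)\int_0^b A|R_b|^2\,dz$ and $\operatorname{Im}(\lambda)\,\operatorname{Im}\psi_b(\lambda)\ge 0$; letting $b\to\infty$ this survives for $\psi$. A function holomorphic on $\mathbb{C}\setminus(-\infty,0]$, nonnegative on $(0,\infty)$ and mapping the upper half-plane into its closure is precisely a complete Bernstein function, which finishes (ii). (One can also avoid complex $\lambda$: differentiate $\psi(\lambda)=\lambda\int_0^\infty A(z)R(z,\lambda)\,dz$ in $\lambda$ repeatedly and use positivity of the string's Green function to see $\psi'$ is completely monotone.)

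For (iii), injectivity follows because $\psi$, through its Stieltjes/Nevanlinna representation, fixes the spectral measure of the string $R''=\lambda AR$, which by the inverse spectral theory of strings (Krein, Gelfand--Levitan) determines $A$ uniquely --- equivalently $A$ can be read off from the short-distance expansion of $R(z,\lambda)$. Surjectivity is Krein's theorem: writing the complete Bernstein function as $\psi(\lambda)=\beta\lambda+\int_{[0,\infty)}\frac{\lambda}{\lambda+t}\,\gamma(dt)$ makes $\psi(\lambda)/\lambda$ a Stieltjes function, and the Kac--Krein correspondence returns a (unique) string with characteristic function $\psi$; constructively one approximates $\psi$ by the continued fractions attached to discrete strings (finitely many point masses) and passes to the limit. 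I expect this surjectivity --- the full inverse spectral problem for strings --- to be the genuine obstacle; a self-contained proof goes through de Branges spaces of entire functions or the Gelfand--Levitan equation, and here we simply invoke \cite{Kwasnicki2018}. The only other delicate point is the uniform (Weyl-disk) control needed to justify the holomorphic continuation in (ii) and the passage $b\to\infty$ in the Green identity.
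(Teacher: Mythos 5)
The paper itself offers no proof of this statement: it is quoted verbatim as background, with a pointer to \cite{Kwasnicki2018}, which in turn rests on Krein's spectral theory of strings. Your sketch therefore does more than the paper does, and the part you actually carry out is sound: the truncation $R_b$ with $R_b(0)=1$, $R_b(b)=0$ is well defined for $\lambda\ge 0$ (and for $\lambda\in\mathbb{C}\setminus(-\infty,0]$, since the Dirichlet eigenvalues of the truncated string force the bad $\lambda$'s into $(-\infty,0)$), the convexity/comparison argument does give monotone limits $R_b\uparrow R$, $\psi_b\downarrow\psi$, passing to the limit in the Volterra equation identifies $R_z(0)=-\psi(\lambda)$, and your blow-up argument for $D=R_2-R_1$ correctly reduces uniqueness to the initial-value problem. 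The energy identity $\psi_b(\lambda)=\int_0^b|R_{b,z}|^2\,dz+\lambda\int_0^b A|R_b|^2\,dz$ is right and, combined with the characterization of complete Bernstein functions as nonnegative Pick functions holomorphic off $(-\infty,0]$, gives the CBF property, modulo the locally uniform bounds (Weyl disks plus Vitali) that you state but do not prove --- that step is standard but is genuinely where the analytic continuation is earned. The one thing to be clear-eyed about is that the ``one-to-one correspondence'' clause, in particular surjectivity of $A\mapsto\psi$, is exactly Krein's inverse spectral theorem (and injectivity is its uniqueness half); you defer both to \cite{Kwasnicki2018}, so that portion of the statement remains cited rather than proven --- which is precisely what the paper does for the whole theorem, so no inconsistency, but your proposal should be read as a proof of the direct half plus a citation for the inverse half.
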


We now consider the following extension problem on $X\times\R^+$

\begin{equation} \label{extALv}
    \begin{cases}
    A(z)Lv(x,z) + v_{zz}(x,z) = 0, \ \text{ in } X\times \R^+,\\
    v(x,0) = f(x),\ \text{ in  } X.
    \end{cases}
\end{equation}
By the change of variable $z = \sigma(y)$, where $\sigma(y) = \int_0^y \frac{1}{w(r)}dr$, we have $A(z) = A(\sigma(y)) = (w(y))^2$. Then we can recover the equation
\begin{equation}
    \begin{cases}
    Lu(x,y) +\frac{1}{w(y)}\frac{\partial}{\partial y}\brak{w(y)\frac{\partial}{\partial y} u(x, y)} = 
    Lu(x,y) +\frac{w'(y)}{w(y)}u_y(x, y) + u_{yy}(x,y) =
    0,\ \text{ in }\ X\times \R^+,
    \\
    u(x ,0) = f(x) \ \text{ in } X.
    \end{cases}
\end{equation}
Just like the previous case, we have the following Poisson formula for the equation. 
\begin{equation}\label{vrf}
    v(x,z) = R(z,-L)f(x)
\end{equation}

\begin{thm}
For $f\in D(\psi(-L))$, the formula \eqref{vrf} is a $L^2$-weak solution of the equation \eqref{extALv}. In particular, for all $g \in L^2(X,\mu)$, the following equation holds
\begin{equation}\label{weaksol}
    \ang{A(z)Lv(x,z) + v_{zz}(x,z),g(x)} = 0.
\end{equation}
Moreover, the Dirichlet to Neumann condition holds weakly.
\begin{equation}
    \psi(-L)f(x) = -\lim_{z\rightarrow 0}v_z(x,z).
\end{equation}

\end{thm}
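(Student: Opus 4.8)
The plan is to follow the spectral-calculus scheme used for Lemmas \ref{PoiF} and \ref{dn}, transferring the pointwise information about the Krein string solution $R(z,\lambda)$ to the operator $v(\cdot,z):=R(z,-L)f=\int_0^\infty R(z,\lambda)\,dE(\lambda)f$ by testing against $g\in\LX$ and integrating against the spectral measure $dE_{f,g}(\lambda)$.

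First I would record the elementary monotonicity and convexity consequences of the equation $R_{zz}(z,\lambda)=\lambda A(z)R(z,\lambda)$. Since $\lambda\ge 0$ and $A\ge 0$, while $R(\cdot,\lambda)$ is non-increasing with $R(0,\lambda)=1$ and $\lim_{z\to\infty}R(z,\lambda)\ge 0$, one gets $0\le R(z,\lambda)\le 1$; and $R_{zz}\ge 0$ makes $R(\cdot,\lambda)$ convex, so $R_z(\cdot,\lambda)$ is non-decreasing and
\[
-\psi(\lambda)=R_z(0,\lambda)\le R_z(z,\lambda)\le 0,\qquad z\ge 0 .
\]
Integrating the equation once gives $R_z(z,\lambda)+\psi(\lambda)=\lambda\int_0^z A(t)R(t,\lambda)\,dt$, and bounding $R(t,\lambda)\ge R(z,\lambda)$ for $t\le z$ yields the smoothing estimate
\[
0\le \lambda R(z,\lambda)\le \frac{\psi(\lambda)}{\int_0^z A(t)\,dt},\qquad z>0 .
\]
(All $z$-derivatives are those of the locally absolutely continuous representative of $R(\cdot,\lambda)$, which exists since $\lambda A R\in L^1_{\mathrm{loc}}$; this matches the weak-derivative convention in the Krein string theorem.)

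Next I would set up $v(\cdot,z)$. From $0\le R\le 1$, $\N{v(\cdot,z)}_{\LX}^2=\int_0^\infty R(z,\lambda)^2\,dE_{f,f}(\lambda)\le\N{f}_{\LX}^2$, so $v(\cdot,z)\in\LX$. The smoothing estimate together with $f\in\mathcal D(\psi(-L))$ gives $\int_0^\infty\lambda^2 R(z,\lambda)^2\,dE_{f,f}(\lambda)\le C_z^2\N{\psi(-L)f}_{\LX}^2<\infty$, so $v(\cdot,z)\in\mathcal D(L)$ with $\ang{-Lv(\cdot,z),g}=\int_0^\infty\lambda R(z,\lambda)\,dE_{f,g}(\lambda)$. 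Likewise $\abs{R_z}\le\psi$ shows $g\mapsto\int_0^\infty R_z(z,\lambda)\,dE_{f,g}(\lambda)$ is bounded on $\LX$, so it defines $v_z(\cdot,z)\in\LX$ with $\N{v_z(\cdot,z)}_{\LX}\le\N{\psi(-L)f}_{\LX}$, and a dominated-convergence argument (domination by $\psi$, which is integrable against $d\abs{E_{f,g}}$ by Cauchy–Schwarz, plus continuity of $z\mapsto R_z(z,\lambda)$) identifies it with the strong $\LX$-derivative of $z\mapsto v(\cdot,z)$.

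For the extension equation I would test against $g\in\LX$ and, in $z$, against $\varphi\in C_c^\infty((0,\infty))$. Fubini (justified by $\int d\abs{E_{f,g}}\le\N{f}_{\LX}\N{g}_{\LX}$ and the smoothing estimate on $\mathrm{supp}\,\varphi$) together with the weak formulation of $R_{zz}=\lambda A R$ gives
\begin{align*}
\int_0^\infty\ang{v(\cdot,z),g}\,\varphi''(z)\,dz
&=\int_0^\infty\int_0^\infty R(z,\lambda)\,\varphi''(z)\,dz\,dE_{f,g}(\lambda)\\
&=\int_0^\infty\int_0^\infty \lambda A(z)R(z,\lambda)\,\varphi(z)\,dz\,dE_{f,g}(\lambda)\\
&=\int_0^\infty A(z)\,\varphi(z)\,\ang{-Lv(\cdot,z),g}\,dz ,
\end{align*}
i.e. $\partial_{zz}\ang{v(\cdot,z),g}=-A(z)\ang{Lv(\cdot,z),g}$ in $\mathcal D'((0,\infty))$, which is exactly \eqref{weaksol} for a.e. $z>0$ (for every $z$ when $A$ is continuous). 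The two boundary conditions then follow from dominated convergence at the level of the spectral integral: $R(z,\lambda)\to 1$ with $(R(z,\lambda)-1)^2\le 1\in L^1(dE_{f,f})$ gives $v(\cdot,z)\to f$ in $\LX$; and $R_z(z,\lambda)+\psi(\lambda)=\lambda\int_0^z A(t)R(t,\lambda)\,dt\to 0$ as $z\to 0^+$ with $(R_z(z,\lambda)+\psi(\lambda))^2\le 4\psi(\lambda)^2\in L^1(dE_{f,f})$ gives $v_z(\cdot,z)\to -\psi(-L)f$ in $\LX$, hence the weak Dirichlet-to-Neumann relation $\psi(-L)f=-\lim_{z\to 0^+}v_z(\cdot,z)$. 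I expect the main obstacle to be the low regularity in $z$ — $A$ merely locally integrable and $R_{zz}$ only a weak derivative — which forces one to argue through the weak formulation rather than to differentiate pointwise in $z$; the decisive ingredient is the smoothing estimate $\lambda R(z,\lambda)\le\psi(\lambda)/\!\int_0^z A$, which is what places $v(\cdot,z)$ in $\mathcal D(L)$ and legitimizes all the exchanges of integration.
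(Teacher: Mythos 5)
Your proof is correct and takes essentially the same route as the paper's: both reduce the claim, via the spectral measure $dE_{f,g}$, to the scalar facts $R_{zz}=\lambda A R$, $R(0,\lambda)=1$ and $R_z(0,\lambda)=-\psi(\lambda)$. You additionally supply the bounds $0\le R\le 1$, $\abs{R_z}\le\psi$, the smoothing estimate and the dominated-convergence/weak-formulation justifications that the paper's short formal computation leaves implicit, so your write-up is a more careful version of the same argument.
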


\begin{proof}
By the spectral theorem, we can write
\begin{align*}
    \ang{A(z)L v(x,z)+v_{zz}(x,z),g(x)} &= \ang{A(z) L R(z,-L)f(x) + R_{zz}(z,-L)f(x) ,g(x)}\\
    &=\int_0^{+\infty}  \edg{A(z) \lambda R(z,\lambda) + R_{zz}(z,\lambda)} dE_{f,g}(\lambda)\\
    &=0.
\end{align*}

\begin{align*}
\ang{v(x,0),g(x)} &= \ang{R(0,-L)f(x),g(x)}= \int_0^{+\infty} R(0,\lambda) dE_{f,g}(\lambda)= \int_0^{+\infty} 1 dE_{f,g}(\lambda)= \ang{f(x),g(x)}.
\end{align*}

\begin{align*}
    \ang{v_z(x,0),g(x)} &= \ang{R_z(0,-L)f(x),g(x)}= \int_0^{+\infty} R_z(0,\lambda) dE_{f,g}(\lambda)= \int_0^{+\infty} -\psi(\lambda) dE_{f,g}(\lambda)= -\ang{\psi(-L)f(x),g(x)}.
\end{align*}

\end{proof}

Given the existence and uniqueness of $R$, one can define $G$ to be the $\psi$ times the inverse Laplace transform of $R$, i.e.
$$
R(z,\lambda) = \int_0^{+\infty} e^{-t\lambda} G(z,t) \psi(\lambda) dt.
$$
Then we can write
$$
v(x,z) = \int_0^{+\infty} e^{tL} \psi(-L)f(x) G(z,t) dt.
$$
where $G(z,t)$ is precisely the heat kernel to the equation
$$
G_{zz}(z,t) = A(z)G_t(z,t).$$

\section{Applications to Harnack inequalities}

In this section, we will prove the Harnack inequality for the solution of the equation $$
(-L)^s f = 0.$$

%
%

\subsection{Harmonic functions on  Dirichlet spaces}

We denote by $C_c(X)$ the space of all continuous functions with compact support in $X$ and $C_0(X)$ its closure with respect to the supremum norm. A $core$ of $\E$ is a subset $\mathcal{C}$ of $\F \cap C_0(X)$ such that $\mathcal{C}$ is dense in $\F$ with the norm $\N{\cdot}_\F$ and dense in $C_0(X)$ with the supremum norm.

\begin{defi}
A Dirichlet form $\E$ is called $regular$ if it admits a core. 
\end{defi}

\begin{defi}
A Dirichlet form $\E$ is called strongly  local if for any $u,v\in\F$ with compact support, $v$ is constant on a neighbourhood of the support of $u$, then $\E(u,v) = 0$.
\end{defi}

Throughout this section, we assume that $(\mathcal{E},\mathcal{F})$ is a strongly local regular Dirichlet form on $\LX$.  Since  $\mathcal{E}$ is regular, the following definition is valid.

\begin{defi}
Suppose $\E$ is a regular Dirichlet form, for every $u,v\in \mathcal F\cap L^{\infty}(X)$, the energy measure $\Gamma (u,v)$ is defined  through the formula
 \[
\int_X\phi\, d\Gamma(u,v)=\frac{1}{2}[\mathcal{E}(\phi u,v)+\mathcal{E}(\phi v,u)-\mathcal{E}(\phi, uv)], \quad \phi\in \mathcal F \cap C_c(X).
\]
\end{defi}
Note that $\Gamma(u,v)$ can be extended to all $u,v\in \mathcal F$ by truncation (see \cite[Theorem 4.3.11]{SymmetricMarkovProcessesTimeChangeandBoundaryTheory}). According to Beurling and Deny~\cite{beurling1958}, one has then   for $u,v\in \mathcal{F}$
\[
\mathcal E(u,v)=\int_X d\Gamma(u,v)
\]
and $\Gamma(u,v)$ is a signed Radon measure often called the energy measure. 

If $U \subset X$ is an open set, we define
\[
\mathcal{F}_{loc}(U)=\left\{ f \in L^2_{loc}(U), \text{ for every relatively compact } V \subset U , \, \exists f^* \in \mathcal{F}, \, f^*_{\mid V}=f_{\mid V}, \, \mu \, a.e. \right\}
\]
For $f,g \in \mathcal{F}_{loc}(U)$, on can define $\Gamma (f,g)$ locally by $\Gamma (f,g)_{\mid V}=\Gamma( f^*_{\mid V},g^*_{\mid V}) $.

\begin{defi}
Let $U \subset X$ be an open set. A function $f \in \mathcal{F}_{loc}(U)$ is called harmonic in $U$ if for every function $h \in \mathcal{F}$ whose essential support is included in $U$, one has
\[
\mathcal{E} (f, h)=0.
\]
\end{defi}

\subsection{Elliptic Harnack inequality for harmonic functions}

In this section we recall some known results about Harnack inequalities for harmonic functions. The main assumption is the volume doubling property and the existence of nice heat kernel estimates.

\begin{defi}\label{VD}
We say that the metric measure space $(X,d,\mu)$ satisfies the volume doubling property if 
there exists a constant $C>0$ such that for every $x\in X$ and $r>0$,
\[
\mu(B(x,2r))\le C\, \mu(B(x,r)).
\]
\end{defi}
%
%
%

\begin{defi}\label{PI}
We say that $(X,\mathcal{E})$ satisfies the 2-Poincar\'e inequality if there exist constants $C$, $\lambda >1$, such that for any ball $B$ in $X$ and $u\in \mathcal{F}$, we have 
\begin{equation}
\frac{1}{\mu(B)}\int_B \abs{u-u_B} d\mu \leq C \,\text{rad}\,(B) \brak{\frac{1}{\mu(\lambda B)}\int_{\lambda B} d\Gamma(u,u)}^{1/2}
\end{equation}

\end{defi}
We have then the following well-known result (see \cite{heinonen_koskela_shanmugalingam_tyson_2015}).

\begin{thm}\label{Harnack elliptic}
Assume that $(X,d,\mu,\mathcal{E})$ satisfies the doubling condition and the 2-Poincar\'e inequality. There exist a constant $C>0$ and $\delta \in (0,1)$ such that for any ball $B(x,R)\subset X$ and any non-negative function $u \in \mathcal{F}$ which is harmonic on  $B(x,R)$
\[
 \sup_{z \in B(x,\delta R)} u(z) \le C \inf_{z \in B(x,\delta R)} u(z),
\]
where by $\sup$ and $\inf$ we mean the essential supremum and essential infimum.
\end{thm}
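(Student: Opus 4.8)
The statement is classical, so I only sketch the standard route. The plan is to run Moser's iteration, whose two analytic inputs, both consequences of volume doubling together with the $2$-Poincar\'e inequality, are a Caccioppoli energy estimate and a Sobolev inequality with a dimensional gain on balls.

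First I would record the Caccioppoli inequality. Since $(\mathcal E,\mathcal F)$ is strongly local and regular, the energy measure $\Gamma$ obeys Leibniz and chain rules, and for $f$ harmonic and non-negative on $B(x,R)$ one tests the identity $\mathcal E(f,h)=0$ against $h=\phi^2 f^{2\beta-1}$ (with $\phi$ a $C_c$ cutoff, $\beta\ne 1/2$, and, if needed, $f$ replaced by $f\wedge k$), obtaining
\[
\int_X \phi^2\, d\Gamma(f^\beta,f^\beta)\;\le\; C\,\beta^2 \int_X f^{2\beta}\, d\Gamma(\phi,\phi).
\]
Next, from doubling and the $2$-Poincar\'e inequality one derives, following Saloff-Coste and Haj\l asz--Koskela, a local Sobolev inequality: there is $\kappa>1$ and constants (depending on the ball through $\mu(B)$ and $\mathrm{rad}(B)$) such that $\brak{\frac{1}{\mu(B)}\int_B |u|^{2\kappa}\,d\mu}^{1/\kappa}$ is controlled by $\mathrm{rad}(B)^2\,\frac{1}{\mu(B)}\int_B d\Gamma(u,u)+\frac{1}{\mu(B)}\int_B u^2\,d\mu$ for $u\in\mathcal F$ supported in $B$. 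Combining the two for $u=\phi f^\beta$ over shrinking concentric balls yields a reverse-H\"older chain $\N{f}_{L^{2\kappa\beta}(B_{j+1})}\lesssim \N{f}_{L^{2\beta}(B_j)}$ with summable constants; iterating gives $\sup_{B(x,\delta R)}f \le C\brak{\frac{1}{\mu(B)}\int_{B} f^{p}\,d\mu}^{1/p}$ for any fixed $p>0$ (the Li--Schoen trick lowers $p$ to an arbitrarily small value). Running the same iteration on negative powers of a non-negative supersolution produces symmetrically $\inf_{B(x,\delta R)} f \ge c\brak{\frac{1}{\mu(B)}\int_{B} f^{-p}\,d\mu}^{-1/p}$.

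To bridge the two one-sided bounds I would show $\log f\in BMO$ on $B(x,R)$: testing $\mathcal E(f,h)=0$ against $h=\phi^2 f^{-1}$ and using strong locality gives $\int \phi^2\, d\Gamma(\log f,\log f)\le C\int d\Gamma(\phi,\phi)$, and the $2$-Poincar\'e inequality turns this into a mean-oscillation bound for $\log f$ on balls. A John--Nirenberg inequality on the doubling space $(X,d,\mu)$ (equivalently, the Bombieri--Giusti combinatorial lemma) then produces a small $p_0>0$ with $\brak{\frac{1}{\mu(B)}\int_B f^{p_0}\,d\mu}^{1/p_0}\le C\brak{\frac{1}{\mu(B)}\int_B f^{-p_0}\,d\mu}^{-1/p_0}$. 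Chaining this with the two iteration estimates at exponent $p_0$ gives $\sup_{B(x,\delta R)} f\le C\inf_{B(x,\delta R)} f$, which is the claim; since every intermediate estimate is stated $\mu$-a.e., the suprema and infima are automatically the essential ones.

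The main obstacle is the passage from the abstract hypotheses to the Sobolev inequality with a genuine exponent gain: on a general doubling Dirichlet space there is no canonical dimension, so $\kappa$ and the constants must be extracted from the doubling exponent via a chaining/truncation argument, and the resulting inequality holds only ball-by-ball with ball-dependent constants, which one must track through the iteration. A secondary point is the admissibility of the test functions $\phi^2 f^{2\beta-1}$, $\phi^2 f^{-1}$, $\log f$ as elements of $\mathcal F$: one works with truncations $f\wedge k$ and $(f+\eps)\vee\eps$, invokes the Markovian property and the chain rule for $\Gamma$, and passes to the limit. Alternatively, one may bypass all of this by invoking the Grigor'yan--Saloff-Coste characterization: on a strongly local regular Dirichlet space, volume doubling together with the $2$-Poincar\'e inequality is equivalent to the parabolic Harnack inequality (equivalently, two-sided Gaussian heat kernel bounds), and the parabolic Harnack inequality trivially implies its elliptic counterpart, which is exactly Theorem~\ref{Harnack elliptic}.
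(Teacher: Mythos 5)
Your outline is mathematically sound, but it diverges from the paper in an important practical sense: the paper does not prove Theorem~\ref{Harnack elliptic} at all. It records it as a known result and cites the literature (the reference \cite{heinonen_koskela_shanmugalingam_tyson_2015}; in the strongly local Dirichlet form setting the relevant sources are Biroli--Mosco, Sturm, and Saloff-Coste), since the section's standing hypothesis is precisely that $(\mathcal E,\mathcal F)$ is a strongly local regular Dirichlet form satisfying doubling and the $2$-Poincar\'e inequality. What you supply instead is the standard internal machinery behind that citation: Caccioppoli from testing with $\phi^2 f^{2\beta-1}$ (with truncations to justify admissibility), the scale-invariant local Sobolev inequality extracted from doubling plus Poincar\'e, Moser iteration on positive and negative powers, and the $\log f\in BMO$ plus John--Nirenberg (or Bombieri--Giusti) bridge; your closing remark that doubling plus $2$-Poincar\'e is equivalent to the parabolic Harnack inequality (Grigor'yan, Saloff-Coste, Sturm), which implies the elliptic one, is in fact the cleanest way to match the paper's citation-level treatment. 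Your route buys a self-contained argument with explicit dependence of $C$ and $\delta$ on the doubling and Poincar\'e constants, at the cost of having to track the ball-dependent normalizations in the Sobolev step and the quasi-continuity/essential-sup bookkeeping, both of which you correctly flag; the paper's route simply outsources all of this. I see no genuine gap in your sketch, only the expected reliance on standard but nontrivial lemmas (chain and Leibniz rules for the energy measure, self-improving Poincar\'e, John--Nirenberg on doubling spaces) that would need to be quoted precisely in a full write-up.
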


\subsection{Properties of the extended Dirichlet space}

Associated to $\mathcal E$ we can define the bilinear form $\E_a$ on the space $X_a = X \times \R$ with domain $\F_a$,
$$\E_a(u,v) = \int_\R\E(u,v)d\nu_a + \int_{X_a} u_y\cdot v_y \ d \nu_a d\mu,$$
$$\F_a = \crl{
u \in L^2(X_a, d\mu \times d\nu_a),\  \E_a(u,u) < \infty
}.$$
As before, we define the norm
$$
\N{u}_{\E_a}^2 = \N{u}_\LXa ^2 + \E_a(u,u).
$$
\begin{prop}\label{prop:Ea}
$(X_a, d\mu\times d\nu_a, \E_a, \F_a)$ is a strongly local and regular Dirichlet Space, where $-1<a<1$. 
\end{prop}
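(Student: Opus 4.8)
The plan is to verify the four defining properties of a Dirichlet form — densely defined, closed, symmetric, Markovian — for $(\E_a,\F_a)$, and then to add strong locality and regularity. Symmetry is immediate from the symmetry of $\E$ and of the quadratic form $\int u_y v_y$. For \emph{dense definition}, I would exhibit an explicit dense subspace: tensor products $u(x)\phi(y)$ with $u$ in a core $\mathcal C$ of $\E$ and $\phi \in C_c^\infty(\R)$. Such functions lie in $\F_a$ (one checks $\E_a(u\phi,u\phi) = \N{\phi}_{L^2(\nu_a)}^2\E(u,u) + \N{u}_{\LX}^2\N{\phi'}_{L^2(\nu_a)}^2 < \infty$, using that $|y|^a$ is locally integrable since $a>-1$), and finite linear combinations of them are dense in $\LXa = L^2(X;\mu)\otimes L^2(\R;\nu_a)$.

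For \emph{closedness}, suppose $(u_n)$ is Cauchy in $\N{\cdot}_{\E_a}$. Then $u_n \to u$ in $\LXa$, and $(u_n)$ is Cauchy both in the "horizontal" seminorm $\big(\int_\R \E(u_n-u_m,u_n-u_m)\,d\nu_a\big)^{1/2}$ and in the "vertical" seminorm $\N{(u_n)_y - (u_m)_y}_{\LXa}$. The vertical part is handled by closedness of the classical Dirichlet form $\int u_y v_y\,d\nu_a$ on $L^2(\R,\nu_a)$ (equivalently, by Fubini and the closability of the one–dimensional weighted gradient form, valid because $|y|^a \in L^1_{loc}$). For the horizontal part, after passing to a subsequence so that $u_n(\cdot,y)\to u(\cdot,y)$ in $\LX$ for $\nu_a$–a.e.\ $y$, one uses closedness of $\E$ slice-by-slice together with Fatou's lemma in the $y$ variable to conclude $\int_\R \E(u-u_n,u-u_n)\,d\nu_a \to 0$; this also shows $u\in\F_a$. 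For the \emph{Markovian} property: if $v$ is a normal contraction of $u\in\F_a$, then for $\nu_a$–a.e.\ fixed $y$, $v(\cdot,y)$ is a normal contraction of $u(\cdot,y)$, so $\E(v(\cdot,y),v(\cdot,y))\le \E(u(\cdot,y),u(\cdot,y))$ by the Markov property of $\E$; and in the $y$–direction, unit contractions do not increase the one-dimensional energy $\int |u_y|^2 d\nu_a$ by the classical fact for the weighted line. Integrating/adding the two gives $\E_a(v,v)\le\E_a(u,u)$ and $v\in\F_a$.

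Strong locality and regularity I would get from the product structure. \emph{Regularity}: the tensor-product functions $u\otimes\phi$ above, with $u$ in a core of $\E$ and $\phi\in C_c^\infty(\R)$, span a set that is dense in $C_0(X_a)$ in the sup norm (Stone–Weierstrass) and, by an approximation argument combined with the density established for closedness, dense in $\F_a$ in $\N{\cdot}_{\E_a}$; this exhibits a core. \emph{Strong locality}: the energy measure of $\E_a$ decomposes as $\Gamma_a(u,v) = d\nu_a\otimes\Gamma(u,v) + |u_y||v_y|$-type terms on tensor products, and more invariantly $\E_a$ is the form sum of the "pullback" of the strongly local form $\E$ and the strongly local form of the weighted Laplacian $\mathcal B_a$ on $(\R,\nu_a)$; strong locality is stable under such sums, so if $v$ is constant on a neighborhood of $\mathrm{supp}(u)$ then both contributions to $\E_a(u,v)$ vanish.

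The main obstacle is the \emph{closedness} argument, specifically the interchange of the limit in $n$ with the integral over $y$ in the horizontal energy term: one must argue carefully, via a subsequence, a.e.-in-$y$ convergence in $\LX$, slice-wise lower semicontinuity of $\E$, and Fatou, that the limit $u$ genuinely lies in $\F_a$ and that $\E_a(u-u_n,u-u_n)\to 0$ — the subtlety being that $\E(\cdot,\cdot)$ is only defined on $\F$, so one needs the Cauchy property to ensure the slices $u(\cdot,y)$ land in $\F$ for a.e.\ $y$ before the slice-wise estimate even makes sense. Everything else is either a direct consequence of the corresponding property of $\E$ applied slice-wise, or a standard fact about the one-dimensional Bessel-type form on $(\R,|y|^a\,dy)$ for $-1<a<1$.
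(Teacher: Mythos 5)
Your proposal is correct in substance and follows the same skeleton as the paper: symmetry, Markovianity and strong locality are treated as routine, and the real work is (i) closedness and (ii) regularity via the tensor class built from a core $\mathcal{C}$ of $\E$ and nice functions of $y$. The technical execution differs, though. For closedness the paper regards $y\mapsto u_n(\cdot,y)$ as a Cauchy sequence in the Bochner space $L^2(\R,\F;\nu_a)$, obtains the limit from completeness of that space, and then identifies the vertical derivative of the limit by testing against products $\xi(x)\phi(y)$ with $\phi\in C^\infty_c(\R)$; you instead extract a subsequence converging slice-wise in $\LX$ for $\nu_a$-a.e.\ $y$ and invoke lower semicontinuity of the closed form $\E$ plus Fatou. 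Both routes are valid: yours avoids vector-valued $L^2$ machinery at the price of the subsequence/Fatou bookkeeping you flag yourself, while the paper's route produces the limit in one stroke but must separately verify $\partial_y u=u^y$. On regularity you also check sup-norm density of the tensor class in $C_0(X_a)$ (Stone--Weierstrass), a point the paper's proof glosses over, whereas the paper proves the form-norm density in $\F_a$ by a concrete Hilbert-space computation: $(f,\varphi\psi)_{\F_a}=\langle\langle f,\varphi\rangle_{\F},\psi\rangle$, so the orthogonal complement of the tensor class in $(\F_a,\N{\cdot}_{\E_a})$ is trivial.

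The one soft spot in your write-up is precisely that last step: ``an approximation argument combined with the density established for closedness'' is not yet an argument. What you established earlier is density of the tensor class in $\LXa$, and $L^2$-density of a subset of the domain of a closed form does not imply density in the form norm (otherwise every $L^2$-dense subset of $\F_a$ would be a core). This is the crux of regularity, and it needs either the paper's orthogonality computation or a genuine two-step approximation of $f\in\F_a$ (first in $y$, then slice-wise in $x$ using that $\mathcal{C}$ is a core of $\E$). A minor imprecision: closability/closedness of the one-dimensional weighted form does not follow from $|y|^a\in L^1_{loc}$ alone; one also uses $|y|^{-a}\in L^1_{loc}$ (a Hamza-type or $A_2$ condition), which of course holds for $-1<a<1$, so the conclusion you rely on is true.
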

\begin{proof}
One can easily derive the Markovian property and the strong local property. And the density of $\F_a$ follows from the regularity. We are left to show that 
\begin{enumerate}
    \item $\E_a$ is closed, which is equivalent to say $(\F_a, \N{\cdot}_{\E_a})$ is a Banach Space. 
    \item $\E_a$ is regular.
\end{enumerate}

1.\ For the closedness, given a Cauchy sequence $\crl{u_n}$ in $\F_a$, we want to show that there exists $u\in \F_a$ such that $u_n\rightarrow u$ in $\F_a$. Notice that 
    \begin{align*}
        \N{u_n-u_m}_{\F_a}^2 &= \int_\R \int_X |u_n-u_m|^2 d\mu d\nu_a + \int_\R \E(u_n-u_m,u_n-u_m) d\nu_a + \int_X\int_\R |\partial_y u_n-\partial_yu_m|^2 d\nu_a d\mu\\
        &= \int_\R \N{u_n-u_m}_\E^2 d\nu_a +\int_X\int_\R |\partial_y u_n-\partial_yu_m|^2 d\nu_a d\mu.
    \end{align*}
    Since $\F$ is a Banach space, $u_n(\cdot, y)$ can be viewed as a function maps from $\R$ to $\F_a$. In particular, $\crl{u_n(\cdot, y)}$ is a Cauchy sequence in $L^2(\R,\F;\ \nu_a)$. There exists $u$ in $L^2(\R,\F;\ \nu_a)$, such that $u_n\rightarrow u $. Notice that we also have $u_n\rightarrow u $ in $\LXa$.
    And $\crl{\partial_y u_n}$ being a Cauchy sequence in $\LXa$ implies there exists $u^y \in \LXa$ such that $\partial_y u_n \rightarrow u^y$. Now we are left to show that $\partial_y u = u^y$. We recall that the weak derivative for a Bochner  integrable function $h \in L^2(\R, L^2(X);\nu_a)$ is $\partial_y h$, if for any $\phi \in C_c^\infty(\R)$, 
    $$
    \int_\R h \phi' d\nu_a = -\int_\R \partial_y h \phi d\nu_a
    $$
    The equality holds in the sense of $L^2(X)$. Then for any $\phi(y) \in C_c^\infty(\R)$ and  $\xi(x) \in C_c(X)$, notice that $\xi(x)\phi(y) \in \LXa$,
    \begin{align*}
        \int_X \int _\R u(x,y)\xi(x)\phi'(y) d\nu_a d\mu
        &=
        \lim_{n\rightarrow \infty}\int_X \int _\R u_n(x,y)\xi(x)\phi'(y) d\nu_a d\mu\\
        &=
        \lim_{n\rightarrow \infty}\int_X \int _\R \partial _y u_n(x,y)\xi(x)\phi(y) d\nu_a d\mu\\
        &=
        \int_X \int _\R u^y(x,y)\xi(x)\phi(y) d\nu_a d\mu\\
    \end{align*}
    The limits result from $u_n\rightarrow u$ and $\partial_y u_n \rightarrow u^y$ in $\LXa$. Since $u(x,y),u^y(x,y) \in \LXa$, we have 
    $$\int_\R u(x,y)\phi'(y) d\nu_a = \int _\R u^y(x,y) \phi(y) d\nu_a\  a.e. \text{ in } X.$$
    Then by definition, $\partial_y u(x,y) = u^y(x,y)$.

    2.\ To show $\E_a$ is regular, we claim that $\mathcal{C}_a = \mathcal{C} \otimes H^1(\R) \subset \F_a$ is a core of $\E_a$, where $\mathcal{C}$ is the core of $\E$ and $H^1(\R)$ is the Sobolev space over $\R$. Given a function $f(x,y) \in \F_a$, suppose for any $\varphi(x) \in \mathcal{C}$ and $\psi(y) \in H^1(\R)$, 
    \begin{align*}
        (f(x,y), \varphi(x)\psi(y))_{\F_a}=0
    \end{align*}
    Notice that 
    \begin{align*}
         (f(x,y),\varphi(x)\psi(y))_{\F_a}&= \int_{X_a} f(x,y)\varphi(x)\psi(y) d\nu_a d\mu + 
    \int_{\R} \E(f(x,y)\varphi(x))\psi(y) d\nu_a \\
    &\ \ \ +
    \int_{X_a} \varphi(x)\partial_y f(x,y) \psi'(y) d\nu_ad\mu\\
    &= \ang{\ang{f(x,y), \varphi(x)}_\F, \psi(y)}_{H^1(\R)}.
    \end{align*}
    Then by the density of $\mathcal{C}$, we have $f \equiv 0$ a.e. with respect to $\mu\otimes \nu_a$ on $X_a$. Hence $\mathcal{C}_a$ is a core of $\E_a$.

\end{proof}

\begin{thm}\label{ext:VD}
Suppose $(X,d\mu)$ has the volume doubling property, so does $(X_a, \mu_a)$. Here
$X_a = X\times \mathbb{R}$ and $ d\mu_a = d\mu\times d\nu_a$, where $d\nu_a(y) = \abs{y}^ady.$
\end{thm}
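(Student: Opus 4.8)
The plan is to reduce the doubling property of $(X_a,\mu_a)$ to that of $(X,\mu)$ together with a doubling property for the one-dimensional weighted measure $\nu_a$ on $\mathbb{R}$. The point is that balls in the product metric space $X_a = X \times \mathbb{R}$ (with, say, the $\ell^2$ or $\ell^\infty$ product metric) are comparable to products of balls: for $(x,y) \in X_a$ and $r>0$ one has $B_X(x,r) \times (y-r,y+r) \subset B_{X_a}((x,y), cr)$ and conversely $B_{X_a}((x,y),r) \subset B_X(x,r) \times (y-r,y+r)$, up to a fixed dimensional constant $c$. Hence $\mu_a(B_{X_a}((x,y),r)) \simeq \mu(B_X(x,r))\, \nu_a((y-r,y+r))$, and it suffices to show both factors double.

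First I would record that $\mu$ is doubling by hypothesis. Second, the bulk of the argument is to prove that $\nu_a$, the measure $|y|^a\,dy$ on $\mathbb{R}$ with $-1 < a < 1$, is doubling, i.e. there is $C_a$ with $\nu_a((y-2r,y+2r)) \le C_a\,\nu_a((y-r,y+r))$ for all $y \in \mathbb{R}$, $r>0$. This is a classical fact: the weight $|y|^a$ is an $A_2$ (indeed $A_p$ for all $p>1$) Muckenhoupt weight on $\mathbb{R}$ precisely when $-1 < a < 1$, and $A_\infty$ weights give doubling measures. I would either cite this or give the short direct computation: write $I = (y-r,y+r)$, $2I = (y-2r,y+2r)$; by scaling ($y \mapsto ty$ scales $\nu_a$ by $t^{1+a}$) and symmetry one reduces to $y \ge 0$, and then splits into the cases $y \le 4r$ (where $2I$ and $I$ both contain a definite portion of an interval around the origin, and one estimates $\nu_a$ of an interval $(0,\rho)$ by $\rho^{1+a}/(1+a)$) and $y > 4r$ (where $|y| \simeq |y'|$ for all $y' \in 2I$, so $\nu_a(2I) \simeq |y|^a \cdot 4r \le 2 \cdot |y|^a \cdot 2r \simeq 2\,\nu_a(I)$). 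The constant $C_a$ blows up as $a \to -1^+$, which is why the restriction $a > -1$ is essential; this is exactly the case analysis one should expect to be the only real work.

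Finally I would assemble the pieces: combining $\mu_a(B_{X_a}((x,y),2r)) \simeq \mu(B_X(x,2r))\,\nu_a((y-2r,y+2r)) \le C \mu(B_X(x,r)) \cdot C_a\, \nu_a((y-r,y+r)) \simeq C C_a\, \mu_a(B_{X_a}((x,y),r))$ yields the doubling constant for $X_a$, depending only on the doubling constant of $X$, on $a$, and on the dimensional comparison constant between product balls and products of balls. The main obstacle is genuinely just the one-dimensional weighted estimate near the origin; everything else is bookkeeping about product metrics and product measures. (If the paper's convention is that $X_a$ carries the metric making it a metric measure space in the sense used for Theorem~\ref{Harnack elliptic}, one should also note that the comparison of product balls to products of balls holds for any of the standard product metrics, so the statement is independent of that choice.)
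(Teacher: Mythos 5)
Your proposal is correct and follows essentially the same route as the paper: compare metric balls in $X_a$ with products $B_X(x,r)\times(y-r,y+r)$, and combine the doubling of $\mu$ with the doubling of $\nu_a$ (which the paper simply cites from the Fabes--Kenig--Serapioni $A_2$-weight theory, where you offer the short direct case analysis as an alternative). No substantive difference in approach.
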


\begin{proof}
The proof for the doubling property for  $(\R, d\nu_a)$ follows from \cite{FKS} for instance. 
Let $B_a$ be a ball in $X_a$ centered at $(x_0, y_0)$ of radius $R$, $2B_a$ is a co-centered ball of radius $2R$. Denote the projection of $B_a$ onto $X$ and $\R$ to be $B$ and $I$ respectively, and $D = R\times I$. It is clear that $2B_a \subset 2D$. And one can find small enough $\lambda < 1$ such that $\lambda D \subset B_a$. Then
\begin{align*}
    \mu_a(2B_a) \leq \mu_a(2D) = \mu(2B)\nu_a(2I)\leq C \mu(\lambda B)\nu_a(\lambda I) = C\mu_a(\lambda D)\leq \mu_a(B_a).
\end{align*}
\end{proof}

\begin{thm}\label{ext:PI}
Suppose the space $(X,\mu)$ has the Poincar\'{e}'s inequality, i.e. there exist constants $C$, $\lambda >1$, such that for any ball $B$ in $X$ and $u\in \mathcal{F}$, we have 
\begin{equation}
\frac{1}{\mu(B)}\int_B \abs{u-u_B} d\mu \leq C \,\text{rad}\,(B) \brak{\frac{1}{\mu(\lambda B)}\int_{\lambda B} d\Gamma(u,u)}^{1/2}
\end{equation}
so does the space $(X_a, \mu_a)$, in particular, there exist constants $\tilde{C}$, $\tilde{\lambda} >1$, such that for any ball $B_a$ in $X_a$ and $u\in \mathcal{F}_a$, we have 
\begin{equation}\label{poincare}
\frac{1}{\mu(B_a)}\int_{B_a} \abs{u-u_{B_a}} d\mu_a \leq \tilde{C} \,\text{rad}\,(B_a) \brak{\frac{1}{\mu(\tilde{\lambda} B_a)}\int_{\tilde{\lambda} B_a} d\Gamma_a(u,u)}^{1/2}
\end{equation}
\end{thm}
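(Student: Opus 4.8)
The plan is to reduce the $2$-Poincar\'e inequality on $X_a = X \times \mathbb{R}$ to the product of the assumed Poincar\'e inequality on $X$ and the one-dimensional weighted Poincar\'e inequality on $(\mathbb{R}, d\nu_a)$, exactly in the spirit of the doubling argument of Theorem \ref{ext:VD}. First I would fix a ball $B_a \subset X_a$ centered at $(x_0,y_0)$ of radius $R$, let $B \subset X$ and $I \subset \mathbb{R}$ be its coordinate projections, and set $D = B \times I$; as in Theorem \ref{ext:VD} one has $B_a \subset D$ and $\lambda_0 D \subset B_a$ for a fixed small $\lambda_0 < 1$, with all volumes comparable up to a dimensional constant. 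Since replacing the ball $B_a$ by the comparable rectangle $D$ only costs a constant in both the averaging and the energy terms (using the doubling property from Theorem \ref{ext:VD} to control $\mu_a(D)/\mu_a(B_a)$ and $\mu_a(\tilde\lambda B_a)/\mu_a(\tilde\lambda D)$), it suffices to prove \eqref{poincare} for the rectangle $D = B \times I$.

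On $D$ I would run the standard tensorization argument: write $u - u_D = (u - u_{B}(y)) + (u_{B}(y) - u_D)$, where $u_B(y) = \frac{1}{\mu(B)}\int_B u(x,y)\,d\mu(x)$ is the slicewise average over $B$. For the first term, for each fixed $y \in I$ apply the Poincar\'e inequality on $X$ to the function $x \mapsto u(x,y) \in \mathcal{F}$ and integrate in $y$ against $d\nu_a$; this produces the term $\int_{\lambda B \times I} d\Gamma(u,u)\,d\nu_a$, which is controlled by $\int_{\tilde\lambda B_a} d\Gamma_a(u,u)$ since $\Gamma_a(u,u) \ge \Gamma(u,u)$ as measures on $X_a$ (from the definition $\mathcal{E}_a(u,u) = \int_{\mathbb{R}} \mathcal{E}(u,u)\,d\nu_a + \int_{X_a} u_y^2\, d\nu_a d\mu$). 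For the second term, $y \mapsto u_B(y)$ is a function of $y$ alone with $\partial_y u_B(y) = \frac{1}{\mu(B)}\int_B u_y(x,y)\,d\mu(x)$, so I apply the one-dimensional weighted ($|y|^a$) Poincar\'e inequality on the interval $I$ — which holds because $|y|^a$ is an $A_2(\mathbb{R})$ weight for $-1 < a < 1$, and weighted Poincar\'e inequalities on intervals for $A_2$ weights are classical (e.g. \cite{FKS}) — and then use Jensen/Cauchy--Schwarz to bound $|\partial_y u_B(y)|^2 \le \frac{1}{\mu(B)}\int_B u_y(x,y)^2\,d\mu(x)$, again landing inside $\int_{\tilde\lambda B_a} d\Gamma_a(u,u)$. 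Combining the two estimates, and noting $\mathrm{rad}(I) \le 2R$ and $\mathrm{rad}(B) \le 2R$, yields \eqref{poincare} with $\tilde C$ and $\tilde\lambda$ depending only on the constants in the two input inequalities, the doubling constants, and $a$.

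The main obstacle I anticipate is the careful bookkeeping of the dilation constants: one must choose the enlargement factor $\tilde\lambda$ large enough that $\lambda B \times I$ and the one-dimensional enlargement of $I$ both sit inside $\tilde\lambda B_a$, while simultaneously ensuring the comparison $D \leftrightarrow B_a$ (and its dilates) stays within the regime where doubling gives uniform constants; this is where the precise geometry of balls in the product (non-geodesic) metric on $X_a$ has to be pinned down, rather than any analytic difficulty. A secondary technical point is justifying that the slicewise function $x \mapsto u(x,y)$ lies in $\mathcal{F}$ for $\nu_a$-a.e. $y$ and that $u_B(y)$ is weakly differentiable with the stated derivative — both follow from $u \in \mathcal{F}_a$ together with the Fubini-type description of $\mathcal{F}_a$ established in the proof of Proposition \ref{prop:Ea}, but should be stated explicitly.
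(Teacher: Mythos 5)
Your proposal follows essentially the same route as the paper's proof: reduce via doubling to the product region $D=B\times I$, split $u-u_D$ into the slicewise oscillation $u-u_B(y)$ and the average part $u_B(y)-u_D$, control the first by integrating the Poincar\'e inequality on $X$ slice by slice and the second by the one-dimensional $|y|^a$-weighted Poincar\'e inequality plus Jensen, and recombine using $d\Gamma_a(u,u)=d\Gamma(u,u)\,d\nu_a+|\partial_y u|^2\,d\nu_a\,d\mu$ together with a final enlargement $\tilde\lambda B_a\supset\lambda D$. The only differences are presentational: the paper first records explicitly the standard equivalence with the ``arbitrary constant'' form of the inequality (your ball-versus-rectangle comparison plays the same role), while you are more explicit about the $A_2$/FKS input for the one-dimensional weighted Poincar\'e inequality and the Fubini-type slicing facts.
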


\begin{proof}
First we claim that the following statement is equivalent to \eqref{poincare}:

There exist constants $C$, $\lambda >1$, and for any ball $B_a$ in $X_a$ and $u\in \mathcal{F}_a$, there is a constant $c = c(u,B_a)$, such that we have 
\begin{equation}\label{poincarec}
\frac{1}{\mu(B_a)}\int_{B_a} \abs{u-c} d\mu_a \leq C \,\text{rad}\,(B_a) \brak{\frac{1}{\mu(\lambda B_a)}\int_{\lambda B_a} d\Gamma_a(u,u)}^{1/2}
\end{equation}
It's clear that \eqref{poincare}$\implies$\eqref{poincarec}. To show the opposite, suppose \eqref{poincarec} is correct. Notice that
\begin{align*}
    \frac{1}{\mu_a(B_a)}\int_{B_a} \abs{c-u_{B_a}} d\mu_a 
    &= \abs{c-\frac{1}{\mu_a(B_a)}\int_{B_a} u \,d\mu_a}\\
    &\leq \frac{1}{\mu_(B_a)}\int_{B_a} \abs{u-c} d\mu_a,
\end{align*}
Then \eqref{poincare} follows from triangle inequality and \eqref{poincarec} it self. 

We now have the freedom to choose the constant $c$. Let $B = B_a \cap X$, $I = B_a \cap \mathbb{R}$ and $D = B\times I$. Denote $$u_B(y) = \frac{1}{\mu(B)}\int_B u(x,y) d\mu(x),$$and $u_D = \frac{1}{\mu_a(D)}\int_D u d\mu_a = (u_B)_I$. Let $c = u_D$, and consider the region $D$ for now, which is to be fixed later. 
\begin{align*}
\frac{1}{\mu_a(D)}\int_{D} \abs{u-u_D} d\mu_a 
&\leq \frac{1}{\mu(B)}\frac{1}{\nu_a(I)}
\int_B\int_I \abs{u(x,y)-u_B(y)} + \abs{u_B(y)-u_D} d\mu(x) d\nu_a(y).
\end{align*}
For the first part, 
\begin{align*}
\frac{1}{\nu_a(I)}\int_I
\frac{1}{\mu(B)}\int_B \abs{u(x,y)-u_B(y)} d\mu(x)d\nu_a(y) &\leq C \,\text{rad}\,(B)\frac{1}{\nu_a(I)}\int_I
\brak{\frac{1}{\mu(\lambda B)}\int_{\lambda B} d\Gamma(u,u)}^{1/2}d\nu_a(y)\\
&\leq C \,\text{rad}\,(B)
\frac{1}{\nu_a(I)}
\brak{\frac{1}{\mu(\lambda B)}\int_I\int_{\lambda B} d\Gamma(u,u)  d\nu_a(y)}^{1/2} \sqrt{\nu_a(I)}\\
&\leq \tilde{C} \,\text{rad}\,(B)
\brak{\frac{1}{\mu_a(\lambda D)}\int_{\lambda D} d\Gamma(u,u) d\nu_a(y)}^{1/2} 
\end{align*}
where the second inequality follows from the H\"{o}lder's inequality and the last follows from the doubling property.
Similarly, 
\begin{align*}
\frac{1}{\mu(B)}\frac{1}{\nu_a(I)}
\int_B\int_I  \abs{u_B(y)-u_D} d\mu(x) d\nu_a(y) \leq 
\tilde{C} \,\text{rad}\,(B)
\brak{\frac{1}{\mu_a(\lambda D)}\int_{\lambda D} \abs{\frac{\partial u(x,y)}{\partial y}}^2 d\nu_a(y)d\mu(x)}^{1/2}
\end{align*}
Notice that 
\begin{align}\label{Gammaa}
d\Gamma_a(u,u) = d\Gamma(u,u)d\nu_a(y) + \abs{\frac{\partial u(x,y)}{\partial y}}^2 d\nu_a(y)d\mu(x),
\end{align}
then we achieve the Poincar\'{e}'s inequality for the region $D$. Apply the doubling property with $\tilde{\lambda}$ large enough to let $\tilde{\lambda}B_a$ covers $D$, we have 
\begin{equation*}
\frac{1}{\mu(B_a)}\int_{B_a} \abs{u-u_D} d\mu_a \leq C \,\text{rad}\,(B_a) \brak{\frac{1}{\mu(\lambda B_a)}\int_{\lambda B_a} d\Gamma_a(u,u)}^{1/2}
\end{equation*}
which is exactly statement \eqref{poincarec} with $c = u_D$. Hence \eqref{poincare} follows from the claim at the beginning.

\end{proof}

\subsection{Harnack inequalities for $(-L)^s$}

From now on, we assume that $(X,\mathcal{E})$ is doubling and satisfies the 2-Poincar\'e inequality.  As in \cite{Caffarelli_2007}, let us first consider the following extension lemma.
\begin{lem}\label{evenext}
We consider  a solution $f\ge 0$, $f \in D((-L)^s)$  to $(-L)^s f = 0$ in $B(x,R)$. The fact that $(-L)^s f = 0$ in $X$ implies that the function $\tilde U(x,y) = U(x,|y|)$ actually is harmonic in  $B(x,R) \times \R$.
\end{lem}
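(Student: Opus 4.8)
The plan is to show that $\tilde U(x,y) = U(x,|y|)$ satisfies $\E_a(\tilde U, h) = 0$ for every $h \in \F_a$ whose essential support is contained in $B(x,R)\times\R$, which is the definition of harmonicity for the strongly local regular Dirichlet form $\E_a$ from Proposition~\ref{prop:Ea}. The starting point is that on $X_a^+ = X\times(0,\infty)$ the function $U$ solves the extension equation $L_a U = (L + \mathcal B_a)U = 0$ weakly, by Lemma~\ref{PoiF}; by the obvious reflection symmetry, $\tilde U$ solves the same equation weakly on $X_a^- = X\times(-\infty,0)$ as well. Since $L_a = L + \mathcal B_a$ and $\mathcal B_a = \frac{\partial^2}{\partial y^2} + \frac{a}{y}\frac{\partial}{\partial y} = \frac{1}{|y|^a}\partial_y(|y|^a \partial_y \cdot)$ in divergence form, the weak formulation of $L_a \tilde U = 0$ on each half-space is precisely $\E_a(\tilde U, h) = 0$ for test functions $h$ supported away from $\{y = 0\}$. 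So the only real content is to rule out a distributional contribution concentrated on the hyperplane $X\times\{0\}$.

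First I would record that $\tilde U \in \F_{a,loc}(B(x,R)\times\R)$: this needs the even extension to have finite energy near $\{y=0\}$, which follows from the explicit formula for $U_y$ in Lemma~\ref{PoiF} together with the Dirichlet-to-Neumann computation in Lemma~\ref{dn}, since $y^a U_y$ has a finite $L^2$-limit as $y\to 0^+$ and hence $\int_0^\delta |U_y|^2 |y|^a\,dy < \infty$; the even reflection then makes $\partial_y \tilde U$ an odd $L^2_{loc}$ function across the hyperplane with no jump in the weighted flux $|y|^a \partial_y \tilde U$ (it is odd, so the flux from the right is $+c\,(-L)^s f$ and from the left is $-c\,(-L)^s f$ in the limit, and these must be compared with opposite outward normals). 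The key cancellation is exactly that: integrating $\E_a(\tilde U, h)$ over $X\times(\{|y|>\eps\})$, applying the weak equation on each side, and letting $\eps\to 0$, the two boundary terms at $y = \pm\eps$ are
\[
\int_X \big(|y|^a U_y\big)\big|_{y=\eps}\, h(\cdot,\eps)\,d\mu \quad\text{and}\quad -\int_X \big(|y|^a U_y\big)\big|_{y=-\eps}\, h(\cdot,-\eps)\,d\mu,
\]
and since $\tilde U$ is even in $y$ while $\partial_y\tilde U$ is odd, these limits are $-c\,\langle (-L)^s f, h(\cdot,0)\rangle$ and $-c\,\langle (-L)^s f, h(\cdot,0)\rangle$ — wait, with the orientation of the outward normals on the two sides they come with opposite signs and cancel, using $(-L)^s f = 0$ on $B(x,R)$ in any case to kill the common factor. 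I would make this precise by testing against $h = g(x)\eta(y)$ with $\eta$ even, density of such products (this is the core $\mathcal C_a = \mathcal C\otimes H^1(\R)$ from the proof of Proposition~\ref{prop:Ea}), and reducing to the spectral identities already established in Lemmas~\ref{PoiF} and~\ref{dn}.

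The main obstacle is the regularity/integrability bookkeeping near $\{y=0\}$: one must justify that $\tilde U$ genuinely lies in $\F_{a,loc}$ (not merely in $L^2_{loc}$ with $L^2_{loc}$ weighted $y$-derivative on each side) and that the integration by parts in $y$ producing the boundary terms is legitimate with the $A_2$-weight $|y|^a$, $-1 < a < 1$. Once the flux $|y|^a\partial_y\tilde U$ is shown to extend across $\{y=0\}$ as an odd function with vanishing trace there — which is where the hypothesis $(-L)^s f = 0$ on $B(x,R)$, equivalently the vanishing of the Dirichlet-to-Neumann data from Lemma~\ref{dn}, enters decisively — the absence of a singular part on the hyperplane is immediate and $\E_a(\tilde U, h) = 0$ follows. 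I expect everything else (the Markovian and strong-locality structure, density of the core) to be routine given Proposition~\ref{prop:Ea}.
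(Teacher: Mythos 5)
Your strategy is the same as the paper's: test $\E_a(\tilde U,h)$ against $h$ supported in $B(x,R)\times\R$, discard the strip $\{|y|<\eps\}$ in the limit (local integrability of $L\tilde U$ and $\tilde U_y$ near $\{y=0\}$, which you rightly flag and which is exactly the paper's term $J\to 0$), integrate by parts on $\{|y|\ge\eps\}$ where the weak equation of Lemma \ref{PoiF} kills the bulk, and control the resulting boundary terms at $y=\pm\eps$ via the weighted flux of Lemma \ref{dn}. Your bookkeeping of $\tilde U\in\F_{a,loc}$ across the hyperplane is, if anything, more careful than the paper's.

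However, the decisive sentence of your argument is wrong as stated: the two boundary terms at $y=\pm\eps$ do \emph{not} cancel because of the opposite outward normals. With outward normal $-\partial_y$ at $y=\eps$ (for the region $y>\eps$) and $+\partial_y$ at $y=-\eps$ (for the region $y<-\eps$), and with $\partial_y\tilde U$ odd in $y$, both terms equal $-\int_X \eps^{a}\,U_y(\cdot,\eps)\,h(\cdot,\pm\eps)\,d\mu$, so they \emph{add}; this is precisely the factor $2\int_{B\cap\{|y|=\eps\}}|y|^a\,\tilde U_y\,h\,dS$ appearing in the paper's proof. As $\eps\to 0$ their sum tends, by Lemma \ref{dn}, to a nonzero constant times $\langle (-L)^s f,\,h(\cdot,0)\rangle$, and it vanishes solely because $(-L)^s f=0$ on $B(x,R)$, which contains the support of $h(\cdot,0)$. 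Indeed, if the boundary terms cancelled by symmetry alone, the even reflection of the extension of an \emph{arbitrary} $f\in D((-L)^s)$ would be $\E_a$-harmonic across $\{y=0\}$, which contradicts the Dirichlet-to-Neumann relation unless $(-L)^s f\equiv 0$. Since you do add "using $(-L)^s f=0$ on $B(x,R)$ in any case," the conclusion survives, but the mechanism must be corrected: the fluxes from the two sides reinforce each other and are annihilated by the hypothesis, not by cancellation of orientations.
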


\begin{proof}
By a density argument, it is enough to show that for all continuous $h \in \mathcal{F}_a$ whose compact support is included in $B(x,R) \times \R$,
$$
\E_a(\tilde{U}, h) = \int_{X_a} (-L\UT \cdot h + \UT_y\cdot h_y)d\mu d\nu_a = 0.
$$
Fix $h$, and let  $B=B(x,R) \times (-M,M)$ that contains $\text{supp} \ h$. For some small $\epsilon > 0$, We can write 
\begin{align*}
    \int_{X_a} (L\UT \cdot h - \UT_y\cdot h_y)\ d\mu d\nu_a &=
\int_{B \cap \crl{\abs{y} \geq \epsilon}} (L\UT \cdot h + \UT_y\cdot h_y)\ d\mu d\nu_a + 
\int_{B \cap \crl{\abs{y} < \epsilon}} (L\UT \cdot h - \UT_y\cdot h_y)\ d\mu d\nu_a \\
&:= I + J.
\end{align*}

For part $I$, since the region is away from the hyperplane $\crl{y=0}$, we can simply do integration by parts and apply the fact that $\UT$ is the weak solution to the equation (\ref{extL}).

The integration region of  part $I$ is away from the hyperplane $\crl{y=0}$, and the part $J \to 0$ as $\epsilon \to 0$.

\begin{align*}
    I = \int_{B \cap \crl{\abs{y} \geq \epsilon}} (L\UT \cdot h - \UT_y\cdot h_y)\ d\mu d\nu_a = \int_{B \cap \crl{\abs{y} \geq \epsilon}}
    L\UT \cdot h
    \ d\mu d\nu_a - 
    \int_{B \cap \crl{\abs{y} \geq \epsilon}}
    \UT_y\cdot h_y
    \ d\mu d\nu_a.
\end{align*}
Integration by parts on the second term yields,
\begin{align*}
    \int_{B \cap \crl{\abs{y} \geq \epsilon}}
    \UT_y\cdot h_y
    \ d\mu d\nu_a 
    &= 
    \int_{B \cap \crl{\abs{y} \geq \epsilon}}
    \UT_y\cdot h_y \cdot \abs{y}^a
    \ d\mu dy 
    \\
    &=
    \int_{B \cap \crl{\abs{y} \geq \epsilon}}
    \partial_y\brak{\abs{y}^a\UT_y } h
    \ d\mu d\nu_a
    -
    \int_{\partial\brak{{B \cap \crl{\abs{y} \geq \epsilon}}}}
    \abs{y}^a\UT_y  h
    \ dS\\
    &=
    \int_{B \cap \crl{\abs{y} \geq \epsilon}}
    \B \UT h
    \ d\mu d\nu_a
    -
    2\int_{{B \cap \crl{\abs{y} = \epsilon}}}
    \abs{y}^a\UT_y  h
    \ dS\\
\end{align*}
The last equality follows from $h|_{\partial{B}} = 0$. Since $\UT$ is the even extension of the solution to the equation (\ref{extL}), we have 

\begin{align*}
    I = \int_{B \cap \crl{\abs{y} \geq \epsilon}} (L\UT - \B\UT)h_y\ d\mu d\nu_a  +
    2\int_{{B \cap \crl{\abs{y} = \epsilon}}}
    \abs{y}^a\UT_y  h
    \ dS = 2\int_{{B \cap \crl{\abs{y} = \epsilon}}}
    \abs{y}^a\UT_y  h
    \ dS
\end{align*}
And $\lim_{\epsilon\to 0}I = 0$ follows from the assumption that $(-L)^s f = 0$ and Lemma \ref{dn}. 

By the proof of Lemma \ref{PoiF}, $L\UT$ and $\UT_y$ are locally integrable, hence $\lim_{\epsilon\to 0}J = 0$. Since the statement hold for arbitrary $\epsilon >0$, we have the desired result.
\end{proof}
%
%
%

We are ready to prove the Harnack's inequality for solutions of $(-L)^sf=0$.

\begin{thm}(Harnack's inequality)\label{Harnack_frac}
Assume that $(X,d,\mu,\mathcal{E})$ satisfies the doubling condition and the 2-Poincar\'e inequality. There exist a constant $C>0$ and $\delta \in (0,1)$ such that for any ball $B(x,R)\subset X$ and any non-negative function $f \in D((-L)^s)$ satisfies $(-L)^sf=0$ on  $B(x,R)$, 
\[
 \sup_{z \in B(x,\delta R)} f(z) \le C \inf_{z \in B(x,\delta R)} f(z).
\]
%
\end{thm}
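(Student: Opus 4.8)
The strategy is the classical Caffarelli–Silvestre descent: transfer the Harnack inequality from the harmonic extension on the higher-dimensional space $X_a = X\times\R$ down to the boundary. The key observation is that if $f\ge 0$ solves $(-L)^s f = 0$ on $B(x,R)$, then by Lemma \ref{evenext} its even $s$-harmonic extension $\tilde U(x,y) = U(x,|y|)$ is harmonic (in the sense of the strongly local regular Dirichlet form $\E_a$) on the cylinder $B(x,R)\times\R$, and in particular on any ball $B_a \subset B(x,R)\times\R$ centered on the hyperplane $\{y=0\}$. Moreover $\tilde U\ge 0$, since the Poisson formula of Lemma \ref{PoiF} expresses $U(\cdot,y)$ as an average of the $P_t f$ against a positive kernel, and $P_t$ preserves positivity.

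First I would invoke Proposition \ref{prop:Ea} to know that $(X_a, d\mu\otimes d\nu_a, \E_a, \F_a)$ is a strongly local regular Dirichlet space, and Theorems \ref{ext:VD} and \ref{ext:PI} to know that $(X_a,\mu_a)$ inherits both the volume doubling property and the $2$-Poincaré inequality from $(X,\mu)$. Hence the elliptic Harnack inequality, Theorem \ref{Harnack elliptic}, applies verbatim on $X_a$: there are $C>0$ and $\delta\in(0,1)$, independent of the ball, such that any non-negative $\E_a$-harmonic function $v$ on a ball $B_a(p, \rho)\subset X_a$ satisfies $\sup_{B_a(p,\delta\rho)} v \le C\, \inf_{B_a(p,\delta\rho)} v$ (essential sup and inf).

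Next I would choose a ball in $X_a$ adapted to $B(x,R)$: take $p = (x,0)$ and $\rho$ comparable to $R$ (small enough that $B_a(p,\rho)\subset B(x,R)\times\R$, using that the metric on $X_a$ is, up to constants, the product metric, as already used in the proof of Theorem \ref{ext:VD}). Apply Theorem \ref{Harnack elliptic} on $X_a$ to $v = \tilde U$ on this ball, obtaining
\[
\sup_{(z,y)\in B_a(p,\delta\rho)} \tilde U(z,y) \le C \inf_{(z,y)\in B_a(p,\delta\rho)} \tilde U(z,y).
\]
Finally I would restrict this to the slice $\{y=0\}$: since $\tilde U(z,0) = f(z)$ by the boundary condition in Lemma \ref{PoiF}, and since $B(x,\delta' R)\times\{0\}$ is contained in the projection of $B_a(p,\delta\rho)$ for a suitable $\delta'\in(0,1)$, we get $\sup_{B(x,\delta' R)} f \le C \inf_{B(x,\delta' R)} f$, which is the claim (relabel $\delta' $ as $\delta$).

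**Main obstacle.** The delicate point is the passage from the $n+1$-dimensional estimate to the boundary slice, because Theorem \ref{Harnack elliptic} is stated with essential sup/inf with respect to $\mu_a$, and the slice $X\times\{0\}$ is $\mu_a$-null. One must therefore argue that $f$ (equivalently the trace $\tilde U(\cdot,0)$) is controlled by $\tilde U$ on a full-measure set near the hyperplane — e.g. by using continuity of $y\mapsto U(\cdot,y)$ in $L^2$ together with local boundedness/continuity of $\tilde U$ away from and up to $\{y=0\}$ (which follows from the De Giorgi–Nash–Moser regularity available under doubling $+$ $2$-Poincaré, and from the boundary condition $U(\cdot,0)=f$ of Lemma \ref{PoiF}), so that the essential sup and inf of $\tilde U$ over a thin symmetric cylinder around $B(x,\delta R)\times\{0\}$ converge, as the thickness shrinks, to $\sup f$ and $\inf f$ over $B(x,\delta R)$. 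Handling this limiting argument carefully — in particular ensuring $f$ itself has a continuous representative on $B(x,R)$ — is where the real work lies; everything else is a direct citation of the results established earlier in the paper.
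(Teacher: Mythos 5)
Your proposal follows essentially the same route as the paper's proof: extend $f$ via Lemma \ref{evenext}, transfer doubling and the $2$-Poincar\'e inequality to $X_a$ by Proposition \ref{prop:Ea} and Theorems \ref{ext:VD} and \ref{ext:PI}, apply the elliptic Harnack inequality (Theorem \ref{Harnack elliptic}) on a ball of $X_a$ centered on the slice $\{y=0\}$, and restrict to recover the estimate for $f$. You even flag the essential-sup versus $\mu_a$-null-slice subtlety explicitly, a point the paper's own proof passes over in silence, so your version is, if anything, more careful.
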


\begin{proof}
We first note that the extended Dirichlet space $X_a$ satisfies the conclusion of theorem \ref{Harnack elliptic}.
Apply the extension lemma \ref{evenext}, we have a function  $\tilde{U}$ in the extended space $X_a$ which is harmonic in $B(x,R) \times \R$. 
Let $B$ be a ball in $X$, there exists a ball $\tilde{B}$ in $X_a$, such that $$\tilde{B} \cap (X\times \{0\}) = B(x,R)\times \{ 0 \}.$$
Since $\tilde{U}(\cdot, 0) = f$, we have 
$$
\sup_{z\in B(x,\delta R) } f(z) = \sup_{z \in B(x,\delta R)} \tilde{U}(z,0) \leq \sup_{\delta \tilde{B}}\, \tilde{U}  $$
and
$$
\inf_{ \delta \tilde{B}} \,  \tilde{U} \leq \inf_{z \in B(x,\delta R) } \tilde{U}(x, 0) = \inf_{z \in B(x,\delta R) }  f(z)$$
Also, $\tilde{U}$ is non-negative because of $f$. By the Harnack's inequality of $\tilde{U}$, we get the desired result.
\end{proof}
%


\subsection{Boundary Harnack principles}\label{BdyHarnack}
%
%
We will establish the boundary Harnack principles for $(-L)^s$ by applying the result in \cite{lierl2014}. We first address some important prerequisite definition and assumptions about non-symmetric Dirichlet forms. Note that we will derive an simpler version for symmetric case.

\begin{definition}
	Let $(\E,\F)$ be a local, regular Dirichlet form on $L^2(X, \mu)$. Then  $\E^{sym}(u,v) = (1/2)(\E(u,v) + \E(v,u))$ is its symmetric part and $\E^{skew}(u,v) = (1/2)(\E(u,v) - \E(v,u))$ is its skew-symmetric part. 
\end{definition}

\begin{prop}
The symmetric part $\E^{sym}$ of a local, regular Dirichlet form can be writen uniquely as 
$$
\E^{sym}(f,g) = \E^s(f,g) + \int f g d \kappa,\text{ for all } f,g\in \F,
$$
where $\E^s$ is strongly local and $\kappa$ is a positive Radon measure. The second term is also called the killing part. 	
\end{prop}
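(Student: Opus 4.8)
The plan is to deduce the statement from the classical Beurling--Deny decomposition, once $\E^{sym}$ is recognized as a symmetric regular Dirichlet form. First I would verify that $(\E^{sym},\F)$ is a symmetric regular Dirichlet form on $\LX$: it is densely defined and symmetric by construction, and since $\E^{sym}(u,u)=\E(u,u)$ the norm $(\E^{sym}(u,u)+\N{u}_{\LX}^2)^{1/2}$ is exactly the form norm, so closedness of $(\E^{sym},\F)$ is inherited from $(\E,\F)$; the Markovian property and the existence of a core likewise pass to the symmetric part by the standard theory of non-symmetric Dirichlet forms. I would also record that $\E^{sym}$ is local: if $u,v\in\F$ have disjoint compact supports, then $\E(u,v)=\E(v,u)=0$, whence $\E^{sym}(u,v)=0$.

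Next I would invoke the Beurling--Deny representation \cite[Theorem 3.2.1]{DirichletFormsandSymmetricMarkovProcesses}: for all $f,g\in\F\cap C_c(X)$,
\[
\E^{sym}(f,g)=\E^{(c)}(f,g)+\int_{(X\times X)\setminus\mathrm{diag}}(f(x)-f(y))(g(x)-g(y))\,J(dx,dy)+\int_X fg\,d\kappa,
\]
where $\E^{(c)}$ is strongly local, $J$ is a symmetric positive Radon measure off the diagonal, and $\kappa$ is a positive Radon measure on $X$. The key point is that locality of $\E^{sym}$ forces $J\equiv 0$: testing against $f,g$ with disjoint compact supports annihilates the $\E^{(c)}$ and $\kappa$ terms and leaves only a multiple of $\int f(x)g(y)\,J(dx,dy)$, which must therefore vanish for every such pair. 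Setting $\E^s:=\E^{(c)}$ and passing from $\F\cap C_c(X)$ to all of $\F$ by truncation together with the regularity core yields the asserted identity.

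Finally, for uniqueness I would argue directly. Suppose $\E^{sym}(f,g)=\E^s_1(f,g)+\int_X fg\,d\kappa_1=\E^s_2(f,g)+\int_X fg\,d\kappa_2$ for all $f,g\in\F$, with both $\E^s_i$ strongly local. Choosing $f\in\F\cap C_c(X)$ and $g\in\F\cap C_c(X)$ with $g\equiv 1$ on a neighbourhood of $\mathrm{supp}\,f$ makes $\E^s_i(f,g)=0$, so $\int_X f\,d\kappa_1=\int_X f\,d\kappa_2$; since regularity supplies enough such $f$ to separate positive Radon measures, $\kappa_1=\kappa_2$, and hence $\E^s_1=\E^s_2$ on $\F$.

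The hard part will be the first step — confirming that the symmetric part is genuinely a closed, Markovian, regular Dirichlet form, in particular establishing the Markovian inequality for $\E^{sym}$ — together with the routine but slightly delicate extension of the Beurling--Deny identity from the core $\F\cap C_c(X)$ to all of $\F$; everything after that is a direct application of the decomposition theorem and an elementary uniqueness argument.
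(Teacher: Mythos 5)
The paper does not actually prove this proposition; it is recalled as a standard fact (the Beurling--Deny decomposition specialized to a local form, as used in Lierl's framework), so there is no in-paper argument to compare against. Your proof is correct and is the standard justification: the symmetric part of a (possibly non-symmetric) Dirichlet form is a regular symmetric Dirichlet form with the same domain and the same form norm, locality of $\E^{sym}$ annihilates the jump measure in the Beurling--Deny representation, and your cutoff argument (choosing $g\equiv 1$ near $\mathrm{supp}\, f$ and using regularity to separate Radon measures) gives uniqueness of $\kappa$ and hence of $\E^s$. The only point to flag is the passage from $\F\cap C_c(X)$ to all of $\F$: the term $\int fg\,d\kappa$ for general $f,g\in\F$ should be read with quasi-continuous versions (using that the killing measure charges no set of zero capacity), but since the proposition itself is stated with the same informality, this is a presentational caveat rather than a gap.
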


With respect to $\mathcal{E}$ we can define the following \emph{intrinsic metric} $d_{\mathcal{E}}$ on $X$ by
\begin{equation}\label{eq:intrinsicmetric}
d_{\mathcal{E}}(x,y)=\sup\{u(x)-u(y)\, :\, u\in\mathcal{F}\cap C_0(X)\text{ and } d\Gamma(u,u)\le d\mu\}.
\end{equation}
Here the condition $d\Gamma(u,u)\le d\mu$ means that $\Gamma(u,u)$ is absolutely continuous with 
respect to $\mu$ with Radon-Nikodym derivative bounded by $1$. 

The term ``intrinsic metric'' is potentially misleading because in
 general there is no reason why $d_{\mathcal{E}}$ is a metric on $X$ (it could be infinite for a given pair of points $x,y$
or zero for some distinct pair of points), however in this paper we will work in a standard setting in which it is a metric.
The following definition is from \cite{LenzStollmannVeselic} and references therein, which is based on the classical papers of K. T. Sturm
\cite{St-II,St-III,St-I}).

For the introduction of the following Theorem, we fix a symmetric strongly local regular Dirichlet form $(\hat \E, \F)$ on $L^2(X, \mu)$ with energy measure $\hat \Gamma$. Let $Y$ be an open subset of $X$. Let $(\E, D(\E))$ be another (possibly non-symmetric) local bilinear form on $L^2(X, \mu)$. We need the following two assumptions.

\begin{assump}
	\begin{enumerate}
		\item $(\E, D(\E))$ is a local, regular Dirichlet form. Its domain $D(\E)$ is the same as the domain of the form $(\hat \E,\F)$, that is, $D(\E) = \F$. Let $C_0$ be the constant in the sector condition for $(\E, \F)$, i.e.
		$$ |\E^{skew}(u, v)|  = |(1/2)(\E(u,v) - \E(v,u))| \leq C_0 (\E_1(u,u))^{1/2}(\E_1(v,v))^{1/2},
		$$
		for all $u, v, \in \F$, where $\E_1(f,g) = \E(f,g) + \int_Xfg d\mu.$
		\item There is a constant $C_1 \in (0, \infty)$ so that for all $f,g\in \F_{loc}(Y)$ with $f^2\in\F_{c}(Y)$, 
		$$ C_1^{-1} \int f^2 d\hat \Gamma (g,g) \leq \int f^2 d\Gamma(g,g) \leq C_1 \int f^2 d\hat \Gamma(g,g).$$
		
		\item There are constants $C_2, C_3\in[0, \infty)$ so that for all $f \in \F_{loc}(Y)$ with $f^2 \in \F_c(Y)$,
		$$
		\int f^2 d\kappa  \leq 2\brak{\int f^2 d\mu}^{1/2}\brak{C_2 \int d\hat \Gamma(f,f) + C_3\int f^2 d\mu}^{1/2}
		$$
		
		\item There are constants $C_4, C_5\in[0, \infty)$ sso that for all $f \in \F_{loc}(Y)\cap L_{loc}^\infty(Y), g\in\F_c(Y)\cap L^\infty(Y),$		
		$$
		|\E^{skew}(f, fg^2) | \leq 2\brak{\int f^2d\hat \Gamma(g,g)}^{1/2}
		\brak{C_4\int g^2 d\hat \Gamma(f,f) + C_5 \int f^2g^2d\mu}^{1/2}
		$$
	\end{enumerate}
\end{assump}
\begin{assump}
	There are constants $C_6, C_7 \in [0, \infty)$ such that 
	\begin{align*}
			|\E^{skew}(f, f^{-1}g^2) | \leq &2\brak{\int d\hat \Gamma(g,g)}^{1/2}
	\brak{C_6 \int g^2 d\hat \Gamma(\log f, \log f)}^{1/2}\\
	&+
	2\brak{\int d\hat \Gamma(g,g) + \int g^2 d\hat \Gamma(\log f, \log f)}^{1/2}
	\brak{C_7\int g^2 d\mu}^{1/2}
	\end{align*}
\end{assump}

Secondly, we introduce the notion of (inner) uniformity. Let $\Omega \subset X$ be open and connected. Recall that the $inner\ metric$ on $\Omega$ is defined as 
$$
d_\Omega(x,y) = \inf \crl{ \text{length}(\gamma)\ |\ \gamma:[0,1]\rightarrow \Omega \text{ continuous, } \gamma(0) = x, \gamma(1) = y },
$$
and $\tilde \Omega$ is the completion of $\Omega$ with respect to $d_\Omega$. For an open set $B\subset \Omega$, let $\partial_{\tilde \Omega} B = \bar{B}^{d_\Omega}\backslash B$ be the boundary of $B$ with respect to its completion for the metric $d_\Omega$. If $x$ is a point in $\Omega$, denote by $\delta_\Omega(x) = d(x, X\backslash \Omega)$ the distance from $x$ to the boundary of $\Omega$. 

\begin{defi}
Let $\gamma:[\alpha, \beta] \rightarrow \Omega$ be a rectifiable curve in $\Omega$ and let $c\in (0,1), C\in(1,\infty)$. We call $\gamma $ a $(c,C)-uniform$ curve in $\Omega$ if 
$$
\delta_\Omega(\gamma(t)) \geq c\cdot \min\crl{d(\gamma(\alpha), \gamma(t)), d(\gamma(t), \gamma(\beta))}, \text{ for all } t\in[\alpha,\beta],
$$
and if 
$$\text{length}(\gamma) \leq C\cdot d(\gamma(\alpha), \gamma(\beta)).$$
The domain $\Omega$ is called $(c, C)-uniform$ if any two points in $\Omega$ can be joined by a $(c, C)-uniform$ curve in $\Omega$.
\end{defi}
In the following discussion, we suppose $\Omega$ is a $(c_u, C_u)$-inner uniform domain in$(X, d)$.
Then we are ready to introduce the Theorem 4.2 of \cite{lierl2014}.

\begin{thm}\label{BH:ref}
    Let $(X, \mu, \hat\E, \F)$ be a strongly local regular symmetric Dirichlet space and $Y$ be an open subset of $X$.
	Suppose the Volume doubling property (Definition \ref{VD}), Poincar\'{e} inequality (Definition \ref{PI}) hold, together with the following two properties.   
   	\begin{align}
		\text{\parbox{.85\textwidth}{The intrinsic distance $d$ is finite everywhere, continuous, and defines the original topology of $X$.}}\tag{A1}\\
		\text{\parbox{.85\textwidth}{For any ball $B(x, 2r) \subset Y$, $B(x,r)$ is relatively compact. }}\tag{A2-Y}
	\end{align} 
    Suppose $(\E, \F)$ is another Dirichlet form satisfying the Assumptions 1 and 2.     Let $\Omega \subset Y$ be a bounded inner uniform domain in $(X, d)$. There exists a constant $A_1\in (1, \infty)$ such that for any $\xi \in \partial_{\tilde \Omega}\Omega$ with $R_\xi >0$ and any
    $$
    0 < r < R \leq \inf\{R_{\xi'}: \xi'\in B_{\tilde\Omega}(\xi, 7R_\xi)\backslash \Omega\},
    $$
    and any two non-negative weak solutions $u,\ v$ of $L u = 0$, where $L$ is the generator of $\E$,  in $Y' = B_\Omega(\xi, 12 C_\Omega r)\backslash \Omega$ with weak Dirichlet boundary condition along $B_{\tilde \Omega}(\xi, 12 C_\Omega r)\backslash \Omega$,  we have
    $$
    \frac{u(x)}{u(x')}\leq A_1 \frac{v(x)}{v(x')},$$
    for all $x, x'\in B_\Omega(\xi, r)$. The constant $A_1$ depends only on the volume doubling constant, Poincar\'{e} constant, the constants $C_0-C_7$ which give control over the skew-symmetric part and the killing part of the Dirichlet form, the inner uniformity constants $c_u$, $C_u$ and an upper bound on $C_8R^2$.

\end{thm}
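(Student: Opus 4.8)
The plan is to recognize this statement as Theorem~4.2 of \cite{lierl2014} and, rather than reproduce a large portion of that paper, to verify that our standing hypotheses are exactly the ones required there and to indicate the architecture of the argument. The proof rests on three successive reductions, each valid once the volume doubling property, the $2$-Poincar\'e inequality, (A1), (A2-Y) and Assumptions~1 and 2 are in force.

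\textbf{Step 1: a scale-invariant parabolic Harnack inequality.} First I would upgrade the standing hypotheses to a full parabolic Harnack inequality for local weak solutions of $\partial_t u = Lu$ and of the adjoint equation on intrinsic balls contained in $Y$, together with interior H\"older continuity of $L$-harmonic functions. This is a Moser iteration: Assumption~1(2) lets one replace the energy measure $\Gamma$ by the symmetric reference energy $\hat\Gamma$ up to constants, the sector condition 1(1) and the estimates 1(3)--1(4) and Assumption~2 absorb the skew-symmetric part $\mathcal{E}^{skew}$ and the killing measure $\kappa$ into $\hat\Gamma$ and $\mu$ with explicit constants, and then the symmetric strongly local scheme of Sturm and Saloff-Coste (via VD $+$ PI) applies essentially verbatim. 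In particular the elliptic Harnack inequality, already recorded as Theorem~\ref{Harnack elliptic} for $\hat{\mathcal{E}}$, holds for $L$-harmonic functions.

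\textbf{Step 2: Carleson estimate and doubling of harmonic measure.} Next I would exploit the inner uniformity of $\Omega$: any two points of $B_\Omega(\xi,r)$ are joined by a Harnack chain of controlled length staying at distance $\gtrsim$ its length from $X\setminus\Omega$, and at each scale $\rho$ there is a corkscrew point $x_\rho$ with $\delta_\Omega(x_\rho)\simeq\rho$. Chaining the interior Harnack inequality along such chains, together with a maximum principle for the (possibly non-symmetric) form and the construction of the Green function $G_\Omega(\cdot,x_0)$ and of harmonic/caloric measure on $\tilde\Omega$, yields a Carleson-type estimate $\sup_{B_\Omega(\xi,r)} u \lesssim u(x_{Ar})$ for non-negative weak solutions $u$ vanishing (in the weak Dirichlet sense) along the boundary portion, and from it the doubling property of harmonic measure $\omega^{x}$.

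\textbf{Step 3: oscillation argument.} Finally I would run the Caffarelli--Fabes--Mortola--Salsa iteration in the Dirichlet-form formulation of Gyrya--Saloff-Coste and \cite{lierl2014}: setting $h = u/v$ for two admissible non-negative solutions, Steps~1--2 give $\operatorname{osc}_{B_\Omega(\xi,2^{-k}r)} h \le \theta^{k}\operatorname{osc}_{B_\Omega(\xi,r)} h$ for some $\theta\in(0,1)$, whence $u/v$ is bounded (indeed H\"older continuous) up to $\partial_{\tilde\Omega}\Omega$, with a constant $A_1$ depending only on the doubling and Poincar\'e constants, the structural constants $C_0,\dots,C_7$, the inner uniformity constants $c_u,C_u$ and an upper bound for $C_8R^2$, as claimed. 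The main obstacle throughout is that $\mathcal{E}$ is only assumed local and regular, not symmetric, so the entire potential theory entering Steps~1 and 2 --- Moser iteration, maximum principles, Green functions, harmonic measure --- must be redeveloped in the presence of the skew-symmetric and killing perturbations; controlling these uniformly is precisely the role of Assumptions~1 and 2, and it is the technical heart of \cite{lierl2014}.
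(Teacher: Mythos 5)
Your proposal takes essentially the same route as the paper: the statement is simply quoted as Theorem~4.2 of \cite{lierl2014}, and the paper offers no proof of its own beyond that citation, which is exactly what you do (your three-step sketch of Moser iteration, Carleson estimate via inner uniformity, and the oscillation argument is a fair description of the cited proof's architecture, but it is attribution rather than new argument). So the verification reduces, as in the paper, to checking that the hypotheses match Lierl's, which you do correctly.
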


We wish to prove the Boundary Harnack principles over the extended space $(X_a, d\mu_a, \E_a, \F_a)$. We shall see the following property of the intrinsic metric is the only thing left to check.

Let $X_a = X \times \R$ and defined the natural product distance by 
\begin{align}\label{eq:da}
	d_a(z,w)^2 = d (z_x, w_x)^2+ |z_y - w_y|^2,
\end{align}
    where $z,w\in X_a$. For a point $z \in X$, we denote $z = (z_x, z_y)$ ,where $z_x \in X$ and $z_y \in \R$. The intrinsic distance for the Dirichlet space $(X_a, d\mu_a, \E_a, \F_a)$ is defined as 
\begin{align}\label{eq:dEa}
	d_{\mathcal{E}_a}(z,w)=\sup\{u(z)-u(w)\, :\, u\in\mathcal{F}_a\cap C_0(X_a)\text{ and } d\Gamma_a(u,u)\le d\mu_a\}.
\end{align}
Recall the definition for $d\Gamma_a$ in \eqref{Gammaa}.

\begin{lem}\label{lem:dEa}
   	The intrinsic metric $d_{\mathcal{E}_a}$ \eqref{eq:dEa}  is equivalent to the natural product metric $d_a$ \eqref{eq:da}.
\end{lem}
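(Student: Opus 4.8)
The plan is to establish the two-sided bound $c\, d_a(z,w) \le d_{\mathcal{E}_a}(z,w) \le C\, d_a(z,w)$ for some constants $0<c\le C<\infty$ (or, if one is lucky, with $c=C=1$) by separately producing an upper bound and a lower bound, using the explicit product structure of the energy measure $d\Gamma_a$ recorded in \eqref{Gammaa}: $d\Gamma_a(u,u) = d\Gamma(u,u)\,d\nu_a + |\partial_y u|^2\,d\nu_a\,d\mu$, together with the fact (hypothesis (A1) for $X$, which we are assuming for the base space) that the intrinsic metric $d_{\mathcal{E}}$ on $X$ is comparable to $d$. The key point is that the weight $|y|^a$ multiplies \emph{both} the horizontal and vertical parts of $d\Gamma_a$ in the \emph{same} way, so it cancels in the Radon-Nikodym condition $d\Gamma_a(u,u)\le d\mu_a = |y|^a\,dy\,d\mu$: this condition is equivalent to $d\Gamma(u,u)\le d\mu$ in the $x$-variable and $|\partial_y u|^2\le 1$ in the $y$-variable, with no weight appearing. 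This is the structural reason the lemma is true despite the degenerate weight.

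For the \textbf{upper bound}, given $z=(z_x,z_y)$ and $w=(w_x,w_y)$, I would build a competitor $u$ in $\mathcal{F}_a\cap C_0(X_a)$ of the form $u(x,y) = \varphi(x) + \psi(y)$ (suitably cut off to have compact support), where $\varphi$ is a near-optimal function for $d_{\mathcal{E}}(z_x,w_x)$ — so $d\Gamma(\varphi,\varphi)\le d\mu$ and $\varphi(z_x)-\varphi(w_x)$ close to $d(z_x,w_x)$ up to the comparability constant — and $\psi(y)$ is essentially $y$ (again cut off), so $|\psi'|\le 1$ and $\psi(z_y)-\psi(w_y)=z_y-w_y$. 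One checks $d\Gamma_a(u,u) = d\Gamma(\varphi,\varphi)\,d\nu_a + |\psi'|^2\,d\nu_a\,d\mu \le d\mu_a$ when the gradients are controlled; strictly one must be slightly careful since a sum of two admissible functions need not be admissible (the condition is on the gradient, not linear), so instead I would optimize the split: take $\varphi$ admissible for a scaled problem and rescale, or more cleanly, take $u$ to be a single function that simultaneously tracks the straight-line path in $X_a$, yielding $u(z)-u(w)$ comparable to $d_a(z,w)$. For the \textbf{lower bound}, given any admissible $u$ (i.e. $d\Gamma_a(u,u)\le d\mu_a$), one shows $|u(z)-u(w)|\le C\,d_a(z,w)$: restricting $u$ to the slice $X\times\{y_0\}$ shows (using $d\Gamma_a\ge d\Gamma\,d\nu_a$) that $x\mapsto u(x,y_0)$ satisfies $d\Gamma(u,u)\le d\mu$, hence is $d_{\mathcal{E}}$-Lipschitz hence $d$-Lipschitz up to a constant; restricting to a vertical fiber $\{x_0\}\times\R$ and using $|\partial_y u|^2\le 1$ shows $y\mapsto u(x_0,y)$ is $1$-Lipschitz; chaining these along a horizontal-then-vertical path from $z$ to $w$ gives the bound. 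Some care with a.e.-versus-everywhere (continuity of $u$, which is built into the definition via $C_0(X_a)$) and with the degeneracy at $y=0$ is needed, but the degeneracy is harmless because $\nu_a$ gives finite mass to bounded intervals and the condition $|\partial_y u|\le 1$ holds $\nu_a$-a.e., hence Lebesgue-a.e., on $\R$.

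The \textbf{main obstacle} I anticipate is making the upper bound rigorous: the set of admissible functions is not a linear space (the constraint $d\Gamma(u,u)\le d\mu$ is sublinear, not linear), so one cannot naively add an optimal horizontal function and an optimal vertical function. The cleanest fix is to note $d\Gamma$ satisfies a Cauchy-Schwarz / triangle inequality at the level of measures, so for $u=\varphi+\psi$ one gets $d\Gamma_a(u,u)^{1/2}\le d\Gamma_a(\varphi,0)^{1/2}+d\Gamma_a(0,\psi)^{1/2}$ pointwise, and then a genuine $(1+\eps)$-Lipschitz parametrization of a path realizing $d_a(z,w)$ (horizontal geodesic in $X$ composed with a vertical segment) produces a function whose $d\Gamma_a$-gradient is bounded by $1+\eps$ along the path and small elsewhere after cutoff — giving $u(z)-u(w)\ge (1-\eps)d_a(z,w)/(1+\eps)$ and hence, after also invoking the $X$-side comparability $d_{\mathcal E}\simeq d$, the full two-sided equivalence. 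A secondary technical point is ensuring the competitor lies in $C_0(X_a)$ with compact support and in $\mathcal{F}_a$; this is handled by the standard truncation argument, exactly as in the proof that $\mathcal{C}_a=\mathcal{C}\otimes H^1(\R)$ is a core (Proposition \ref{prop:Ea}), so I would invoke that density rather than reprove it.
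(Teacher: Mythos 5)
Your proposal is correct and takes essentially the same route as the paper: the lower bound $d_{\mathcal{E}_a}\gtrsim d_a$ via separated-variable competitors built from a near-optimal horizontal function plus $\pm y$, rescaled to restore admissibility (the paper uses exactly this with the factor $\tfrac12$ in $F=\tfrac12(f(z_x)\pm z_y)$, which is the scaling fix you identify for the non-linearity of the constraint), and the upper bound by restricting an admissible $F$ to a horizontal slice and a vertical fiber and chaining, using the comparability of $d_{\mathcal{E}}$ with $d$ on the base space. The extra machinery you sketch (measure-level triangle inequality, $(1+\eps)$-Lipschitz path parametrizations) is not needed for mere equivalence of metrics, but nothing in your outline is wrong.
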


\begin{proof}
    First we prove $d_{\mathcal{E}_a}(z,w) \gtrsim d_a(z,w)$ for all $z,\ w \in X_a$. Note that $z = (z_x, z_y)$, $w = (w_x, w_y)$ where $z_x, w_x \in X$ and $z_y, w_y\in \R$. For some $0<\delta<1$, there exists $f \in \F^{loc}(X)\cap C(X)$ and $d\Gamma(f,f)\leq d\mu$, such that 
    $$
    f(z_x) - f(w_x) \geq \delta d(z_x, w_x).
    $$
    Let $F(z) = \frac{1}{2}(f(z_x) + z_y)$. It is clear that $F\in \F_a^{loc}(X_a) \cap C(X_a)$ and $d\Gamma_a(F,F)\leq d\nu_a d\mu$.  We have
    \begin{align*}
    d_{\E_a}(z,w) 
    &\geq \frac{1}{2}
    \brak{f(z_x) - f(w_x)} + \frac{1}{2}\brak{z_y - w_y}
    \geq 
    \frac{\delta}{2} d_\E(z_x,w_x)  + \frac{1}{2}\brak{z_y - w_y}\\
    &\geq 
    \frac{c \delta }{2} d(z_x,w_x)  + \frac{1}{2}\brak{z_y - w_y},
    \end{align*}
    where $c$ is the equivlent constant for $d$ and $d_\E$. And similarly for $\tilde F(z) = \frac{1}{2}(f(z_x) - z_y)$, we have     
    $
        d_{\E_a}(z,w) \geq
        \frac{c \delta }{2} d(z_x,w_x) + \frac{1}{2}\brak{w_y - z_y}.
    $
    Hence there exists a constant $C>0$ such that
    \begin{align*}
        d_{\E_a}(z,w) \geq \frac{c \delta }{2} d(z_x,w_x)  + \frac{1}{2}\abs{z_y - w_y} \geq C \sqrt{d(z_x, z_y)^2 + \abs{z_y, w_y}^2} = d_a(z, w).
    \end{align*}
    
    Then we prove the other direction. For a fixed $0<\delta<1$,  any $z, \ w \in X_a$, there exists $F\in \F_a^{loc}(X_a)\cap C(X_a)$ and $d\Gamma_a(F, F)\leq d\nu_a d\mu$, such that 
    $$ F(z) - F(w) \geq \delta d_{\E_a}(z, w).$$
    We wish to prove $d_a(z,w)\geq F(z) - F(w).$ Since $$F(z) - F(w) = F(z_x, z_y) - F(w_x, z_y) + F(w_x, z_y) - F(w_x, w_y).$$
    Let $f(x) = F(x, z_y)$, it is clear that $f\in\F^{loc}(X)\cap C(X)$ and $\frac{d\Gamma(f,f)}{d\mu}(x)\leq \frac{d\Gamma(F,F)}{d\mu}(x, z_y)\leq 1$. Hence 
    $$d_\E(z_x, w_x)\geq f(z_x) - f(w_x) = F(z_x, z_y) - F(w_x ,z_y).$$
    Let $g(y) =F(w_x, y)$, notice that $\abs{g'(y)} = \abs{F_y(w_x, y)}\leq 1.$ Then we have 
    $$ z_y - w_y \geq F(w_x, z_y) - F(w_x, w_y).$$
    Thus $d_a(z,w)\geq \tilde C d_\E(z,w)$ for some constant $\tilde C$.
\end{proof}

Now we are ready to prove the Boundary Harnack principle for the weak solutions of $(-L)^s$.

\begin{thm}(Boundary Harnack principles)
    Suppose $(X,d,\mu,\mathcal{E})$ is a symmetric, strongly local Dirichlet form  with generator $L$ satisfying the doubling condition and the 2-Poincar\'e inequality. Let $\Omega \subset X$ be a bounded inner uniform domain. There exists a constant $C>0$ such that for any $\xi \in \partial_{\tilde\Omega}\Omega$ with $R_\xi>0$ and any
    $$
    0 < r < R \leq \inf\{R_{\xi'}: \xi'\in B_{\tilde\Omega}(\xi, 7R_\xi)\backslash \Omega\},
    $$
    and any two non-negative weak solutions $u,\ v$ of $(-L)^s u = 0$ in $Y' = B_\Omega(\xi, 12 C_\Omega r)\backslash \Omega$, we have
    $$
    \frac{u(x)}{u(x')}\leq C \frac{v(x)}{v(x')},$$
    for all $x, x'\in B_\Omega(\xi, r)$. The constant $C$ depends only on the volume doubling constant, Poincar\'{e} constant and the inner uniformity constants $c_u, C_u$.
    \end{thm}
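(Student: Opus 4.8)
The plan is to reduce the Boundary Harnack principle for $(-L)^s$ to the Boundary Harnack principle of Theorem~\ref{BH:ref}, applied to the extended Dirichlet space $(X_a, d\mu_a, \E_a, \F_a)$ constructed earlier. The strategy mirrors the proof of the interior Harnack inequality (Theorem~\ref{Harnack_frac}): first lift a solution of $(-L)^s u = 0$ on $Y' \subset X$ to a harmonic function on a suitable subdomain of $X_a$ via the $s$-harmonic extension and the even-reflection Lemma~\ref{evenext}, then invoke the abstract boundary Harnack result on $X_a$, and finally restrict back to the slice $X \times \{0\}$ to recover the desired inequality for $u$ and $v$.

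First I would verify that the extended space $(X_a, d\mu_a, \E_a, \F_a)$ satisfies all the hypotheses of Theorem~\ref{BH:ref}. By Proposition~\ref{prop:Ea} it is a strongly local regular Dirichlet space; here we take $\hat\E = \E_a$ itself, so that $\E = \hat\E$ is symmetric and Assumptions~1 and~2 hold trivially with $C_0 = \dots = C_7 = 0$ (no skew-symmetric part, no killing part, and $C_1 = 1$). By Theorems~\ref{ext:VD} and~\ref{ext:PI}, volume doubling and the $2$-Poincar\'e inequality are inherited by $X_a$. Condition~(A1) — that the intrinsic distance of $\E_a$ is finite, continuous, and induces the original topology — follows from Lemma~\ref{lem:dEa}, which identifies $d_{\E_a}$ with the product metric $d_a$ up to constants; condition~(A2-Y) on relative compactness of balls follows from local compactness of $X$ and of $\R$, hence of $X_a$. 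Next, given the inner uniform domain $\Omega \subset X$, I would build the product domain $\Omega_a = \Omega \times \R$ (or, to keep it bounded, $\Omega \times (-M,M)$ for $M$ comparable to the relevant scales) and check it is inner uniform in $(X_a, d_a)$: a uniform curve in $\Omega$ joining $z_x$ to $w_x$, concatenated with a vertical segment, yields a uniform curve in $\Omega_a$, with inner uniformity constants depending only on $c_u, C_u$. The boundary point $\xi \in \partial_{\tilde\Omega}\Omega$ corresponds to $(\xi, 0) \in \partial_{\tilde\Omega_a}\Omega_a$, and the scales $R_\xi$, $r$, $R$ transfer with only dimensional losses.

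The main work, and the step I expect to be the principal obstacle, is matching the notions of ``weak solution with Dirichlet boundary condition'' on the two sides. We start with non-negative weak solutions $u, v$ of $(-L)^s u = 0$ on $Y' = B_\Omega(\xi, 12 C_\Omega r)\setminus\Omega$; we form their $s$-harmonic extensions $U, V$ via Lemma~\ref{PoiF} and then their even reflections $\tilde U = U(\cdot, |y|)$, $\tilde V = V(\cdot, |y|)$, which by Lemma~\ref{evenext} are harmonic (in the $\E_a$ sense) across $X \times \{0\}$ on the lifted region $Y'_a = B_{\Omega_a}((\xi,0), 12 C_{\Omega_a} r)\setminus\Omega_a$. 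One must check that $\tilde U, \tilde V \in \F_a^{loc}(Y'_a)$ and that the even reflection respects the weak Dirichlet boundary condition along $\partial_{\tilde\Omega_a}\Omega_a$: a Dirichlet condition for $u$ on the lateral boundary $\partial_{\tilde\Omega}\Omega \cap Y'$ lifts to a Dirichlet condition for $\tilde U$ on $(\partial_{\tilde\Omega}\Omega \times \R) \cap Y'_a$, while the top and bottom faces $\Omega \times \{\pm M\}$ are interior or can be handled by choosing $M$ large relative to the scales in play, so they do not interfere. This verification is somewhat delicate because it intertwines the behaviour of the weighted Bessel extension near $y = 0$ (governed by Lemma~\ref{dn}, which guarantees the vanishing weighted Neumann data precisely when $(-L)^s u = 0$) with the abstract definition of weak solution on $X_a$; making the reflection argument of Lemma~\ref{evenext} work in the presence of the lateral Dirichlet boundary, rather than on all of $B(x,R)\times\R$, is the crux.

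Once $\tilde U, \tilde V$ are identified as non-negative weak solutions of $L_a w = 0$ on $Y'_a$ with weak Dirichlet boundary condition along the lateral boundary, Theorem~\ref{BH:ref} yields a constant $A_1$, depending only on the doubling and Poincar\'e constants of $X_a$ (hence of $X$) and on the inner uniformity constants of $\Omega_a$ (hence $c_u, C_u$), such that
\[
\frac{\tilde U(P)}{\tilde U(P')} \le A_1 \frac{\tilde V(P)}{\tilde V(P')}
\]
for all $P, P' \in B_{\Omega_a}((\xi,0), r)$. Restricting to $P = (x,0)$, $P' = (x',0)$ with $x, x' \in B_\Omega(\xi, r)$ and using $\tilde U(x,0) = u(x)$, $\tilde V(x,0) = v(x)$ gives the claimed inequality with $C = A_1$; the stated dependence of $C$ follows since all the $\E$-data passes through the doubling constant, the Poincar\'e constant, and $c_u, C_u$, with the skew and killing constants vanishing. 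A final remark would address the reduction of the $a$-dependence: although the constants produced by Theorems~\ref{ext:VD}, \ref{ext:PI} and Lemma~\ref{lem:dEa} depend on $a = 1 - 2s$, for fixed $s$ this is harmless, and one notes that the non-negativity and the membership $u \in D((-L)^s)$ are what make the extension machinery applicable.
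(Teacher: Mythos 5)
Your proposal is correct and takes essentially the same route as the paper: apply Theorem \ref{BH:ref} with $\hat{\E}=\E=\E_a$ (so Assumptions 1 and 2 are trivial), verify doubling, Poincar\'e, (A1) and (A2-Y) for $X_a$ via Proposition \ref{prop:Ea}, Theorems \ref{ext:VD}, \ref{ext:PI} and Lemma \ref{lem:dEa}, lift $u,v$ by the $s$-harmonic extension and even reflection of Lemmas \ref{PoiF} and \ref{evenext}, and restrict the resulting boundary Harnack inequality on $X_a$ back to the slice $X\times\{0\}$. You actually treat the lifting of the inner uniform domain and of the weak Dirichlet boundary condition in more detail than the paper, which compresses that final step into a single sentence referring back to Theorem \ref{Harnack_frac}.
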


\begin{proof}
We apply the Theorem \ref{BH:ref}. First, we make $\E$ and $\hat \E$ in the Theorem \ref{BH:ref} coincide with $\E_a$, which is symmetric, strongly local and regular by Proposition \ref{prop:Ea}, hence the Assumption 1 and 2 are automatically satisfied. Secondly, since we assume the volume doubling property and the Poincar\'{e} inequality for the underlying form $(\E, \F)$, they can be extended to the form $(\E_a, \F_a)$ by Theorem \ref{ext:VD} and \ref{ext:PI}. We also make the open set $Y$ in Theorem \ref{BH:ref} to be the entire extended space $X_a$, then the condition (A1) and (A2-Y) are satisfied by the Lemma \ref{lem:dEa}. Now we can establish a boundary Harnack principle on the extended space $X_a$ for $\E_a$. By restricting the boundary value on the space $X$, which is similar to the steps in Theorem \ref{Harnack_frac}, we finish the proof. 
\end{proof}

\section*{Acknowledgments}

F.B. is partially funded by NSF grant  DMS-1901315, Q.L. would like to thank University of Connecticut for its hospitality during the preparation of this work. There is no data associated to the present article.

\bibliographystyle{plain}
\bibliography{Refs}

\end{document}